\setlist[itemize]{label=\textbullet} % pour mettre "bullet"
\newenvironment{myfigure}%
  {\begin{figure}[htbp!]\centering}%
  {\end{figure}}
\def\myfootnotemark#1#2{%
{\let\thefootnote\relax\footnotemark\addtocounter{footnote}{-1}\hspace{-.2ex}}%
\textsuperscript{\hyperref[#1:#2]{\the\numexpr\thefootnote+#2}}%
}
\def\myfootnotetext#1#2#3{%
\addtocounter{footnote}{1}%
\footnotetext{\label{#1:#2}#3}%
}
\newtheorem{claim}{Claim}
\newtheorem{lemma}{Lemma}
\newtheorem{proposition}{Proposition}
\newtheorem{corollary}{Corollary}
\newenvironment{proof}{\noindent\textbf{Proof.}}{{}\hfill$\Box$\\}
\newenvironment{proofof}{\noindent\textbf{Proof of~}}{{}\hfill$\Box$\\}
\def\leq{\leqslant}\def\le{\leq}
\def\geq{\geqslant}\def\ge{\geq}
\def\emptyset{\varnothing}
\def\eps{\varepsilon}
\NewDocumentCommand\set{sm}{\IfBooleanTF#1{\{{#2}\}}{\left\{{#2}\right\}}}
\NewDocumentCommand\ceil{sm}{\IfBooleanTF#1{\lceil{#2}\rceil}{\left\lceil{#2}\right\rceil}}
\NewDocumentCommand\floor{sm}{\IfBooleanTF#1{\lfloor{#2}\rfloor}{\left\lfloor{#2}\right\rfloor}}
\NewDocumentCommand\pare{sm}{\IfBooleanTF#1{({#2})}{\left({#2}\right)}}
\NewDocumentCommand\range{smm}{\IfBooleanTF#1{\set*{{#2},\dots,{#3}}}{\set{{#2},\dots,{#3}}}}
\NewDocumentCommand\card{sm}{\IfBooleanTF#1{|{#2}|}{\left|{#2}\right|}}
\NewDocumentCommand\cardV{sm}{\IfBooleanTF#1{|V({#2})|}{\card{V\!\pare{#2}}}}
\def\Bdiv#1#2{\binom{#1}{2}{\big/}\binom{#2}{2}}
\newcommand{\cH}{\mathcal{H}}
\newcommand{\sC}{\mathscr{C}}
\newcommand{\sF}{\mathscr{F}}
\newcommand{\sG}{\mathscr{G}}
\newcommand{\sK}{\mathscr{K}}
\newcommand{\sP}{\mathscr{P}}
\newcommand{\sT}{\mathscr{T}}
\newcommand{\sU}{\mathscr{U}}
\tikzstyle{vertex}=[draw=black,fill=white,circle,inner sep=1.4pt]
\tikzstyle{circle}=[fill=white, draw=black, shape=circle]
\tikzstyle{green_node}=[fill=green!20, draw=black, shape=circle]
\tikzstyle{red_node}=[fill=red!20, draw=black, shape=circle]
\tikzstyle{blue_node}=[fill=blue!20, draw=black, shape=circle]
\tikzstyle{yellow_node}=[fill=yellow, draw=black, shape=circle]
\tikzstyle{pink_node}=[fill={rgb,255: red,251; green,15; blue,192}, draw=black, shape=circle]
\tikzstyle{doted_node}=[fill=white, draw=black, shape=circle, dashed]
\tikzstyle{red_square}=[fill=red!20, draw=black, shape=rectangle]
\tikzstyle{green_square}=[fill=green!20, draw=black, shape=rectangle]
\tikzstyle{yellow_square}=[fill=yellow, draw=black, shape=rectangle]
\tikzstyle{ellipse}=[fill=white, draw=black, shape=ellipse]
\tikzstyle{orange_node}=[fill={rgb,255: red,255; green,128; blue,0}, draw=black, shape=circle]
\tikzstyle{blue_square}=[fill=blue!20, draw=black, shape=rectangle]
\tikzstyle{edge}=[-]
\tikzstyle{purple_edge}=[-, draw=magenta]
\tikzstyle{blue_edge}=[-, draw={rgb,255: red,30; green,52; blue,255}]
\tikzstyle{orange_edge}=[-, draw={rgb,255: red,255; green,128; blue,0}]
\tikzstyle{doted_edge}=[-, tikzit draw=black, dashed]
\tikzstyle{green_edge}=[-, draw=green]
\tikzstyle{blue_doted}=[-, draw=blue, dashed]
\tikzstyle{red_edge}=[-, draw=red]
\tikzstyle{arrow}=[->, draw=black]
\tikzstyle{doted purple}=[-, draw=magenta, dashed]
\title{\textbf{Lower Bounds for Induced-Universal Graphs}}
\author{
  Cyril Gavoille\thanks{\textsf{gavoille@labri.fr}.
  %The work is partially funded by the French ANR project ANR-.... (QuDATA and Tempogral).
  }
  \and
  Amaury Jacques\thanks{\textsf{amaury.jacques@labri.fr}}%
}
\date{%
  LaBRI, University of Bordeaux, France\\[2ex]
  \today%
}
\begin{document}

\maketitle

\begin{abstract}
    We give a series of new lower bounds on the minimum number of vertices required by a graph to contain every graph of a given family as induced subgraph. In particular, we show that this induced-universal graph for $n$-vertex planar graphs must have at least $10.52n$ vertices. We also show that the number of conflicting graphs to consider in order to beat this lower bound is at least~$137$. In other words, any family of less than~$137$ planar graphs of $n$ vertices has an induced-universal graph with less than $10.52n$ vertices, stressing the difficulty in beating such lower bounds. Similar results are developed for other graph families, including but not limited to, trees, outerplanar graphs, series-parallel graphs, $K_{3,3}$-minor free graphs. As a byproduct, we show that any family of $t$ graphs of $n$ vertices having small chromatic number and sublinear pathwidth, like any proper minor-closed family, has an induced-universal graph with less than $\nicefrac{15}{7} \sqrt{t} \cdot n$ vertices. This is achieved by making a bridge between equitable colorings, combinatorial designs, and path-decompositions.
    
    \paragraph{Keywords:} planar graphs, universal graphs, bounded pathwidth graphs, equitable coloring, and combinatorial designs.
\end{abstract}

%\begin{multicols}{2}
%  \tableofcontents
%\end{multicols}
%\clearpage

%%%%%%%%%%%%%%%%%%%%%%%%%%%%%%%%%%%%%%%%%%%%%%%%%%%%%%%%%%%%%%%%%%%%%% 
%\begin{bibunit}

\section{Introduction}

Universality plays an important role in Graph Theory and Computer Science. For the famous Traveling Salesperson Problem, \cite{JLNRS05} showed that every set $S$ of $n$ points in a metric space has some universal tour, computable in polynomial time, that approximates, up to poly-logarithmic factor, the best tour of every subset of $S$ by taking the induced sub-tour of this universal ordering. In graph theory, the celebrated Excluded Grid Theorem of Robertson and Seymour~\cite{RST94} (which states that graphs excluding as minor a fixed planar graph have bounded treewidth), relies on the fact that every planar graph is a minor of some relatively small grid. So, excluding such a grid as minor implies excluding as minor every sufficiently small planar graph. Therefore, rather than considering individually each graph of a given graph family, it is much simpler to manipulate one single graph (here a grid), whose properties is close enough to those of the graphs of the family (here the property is planarity). This grid-minor universality has been extended by \cite{GH23a}, by showing that for every $n$ and $g$, there is some fixed graph of Euler genus $g$ and with only $O(g^2(n+g)^2)$ vertices that is minor-universal for all $n$-vertex graphs of Euler genus $g$. Here the size of the minor-universal graph is an important parameter, and its polynomial dependency in $n$ and $g$ is the crux for the very recent polynomial bound for the Graph Minor Structure Theorem~\cite{GSW25}.

\paragraph{Induced-universal graphs.}

In this paper, we focus on universality under induced-subgraph containment. More precisely, a graph $U$ is an \emph{induced-universal graph} for a graph family $\sF$ if every graph of $\sF$ is isomorphic to an induced-subgraph of $U$. Here, the term "graph family" must be considered in a broad sense: it can contain a finite set of graphs (like two bounded size graphs, or two trees with $n$ vertices) or a countably finite set of graphs (like the class of all planar graphs). We denote by $\sU(\sF)$ the smallest number of vertices of an induced-universal graph for $\sF$, and by $\sF_n$ the sub-family composed only of $n$-vertex graphs of $\sF$ (if any). 

For the family $\sG$ of all graphs, Alon~\cite{Alon17} showed that $\sU(\sG_n) = (1+o(1)) \cdot 2^{(n-1)/2}$. For the family $\sP$ of all planar graphs, a result of \cite{DEGJx21,GJ22} implies that $\sU(\sG_n) \le n \cdot 2^{O(\!\sqrt{\log{n}}\,)}$. And for the family $\sT$ of all trees, \cite{ADBTK17} showed that $\sU(\sT_n) \le cn$, where $c\gg 768$ is a rather large constant\footnote{It is not given explicitly in~\cite{ADBTK17}. Their involved construction is based on a solution for caterpillars that contains at least $12\cdot 2^5 n = 384n$ vertices. Extending the solution for trees requires to at least double the caterpillar solution. An estimate of $10^5$ or $10^6$ for $c$ seems to be closer to reality.}.
The results for trees and planar graphs come from the design of \emph{adjacency labeling schemes}, which can be viewed as an alternative definition of induced-universal graphs with algorithmic applications~\cite{KNR92}.

Whereas for all graphs, the first order of $\sU(\sG_n)$ is well-established, the situation is very different for planar graphs and trees. Up to the best of our knowledge, the only non-trivial better-than-$n$ lower bound is $\sU(\sP_n) \ge 11\floor{n/6} \approx 1.83n$, coming from the family of paths and cycles~\cite[Claim~1]{ELO08}. For trees, the lower bound is even weaker. A folklore bound $\sU(\sT_n) \ge 3\floor{n/2}$ can be obtained by considering a family of two trees that can only share $\ceil{n/2}$ vertices: a star (a depth-1 tree) and a path, each with $n$ vertices. These both lower bounds are ad-hoc techniques based on the impossibility of coexisting a certain number of "conflicting graphs" in a single graph that is too small.

\paragraph{On the difficulty of proving lower bounds.}

It is perhaps worth mentioning that it can be very difficult to prove strict lower bounds on the size of universal objects for specific combinatorial properties. For instance, consider \emph{universal planar point sets}, i.e., a fixed set of points such that each $n$-vertex planar graph has a straight-line drawing on it. The best upper bound is $2n^2$ (an $2n\times n$-grid), and the current best lower bound has been improved only recently to $1.293n$ for $n$ large enough~\cite{SSS20}, a large gap. Interestingly, \cite{SSS20} showed that for $n = 11$, no universal planar point sets of exactly~$n$ points exist. In other words, the lower bound is at least $n+1$ for $n = 11$. Proving an $(n+1)$-lower bound, even for small $n$, is not so easy. Technically, they have considered (computer-assisted) a family of~$49$ "conflicting" graphs, each with~$n = 11$ vertices, which cannot be simultaneously drawn on any set of~$n$ points. They show that more than~$36$ conflicting graphs are required. In other words, for $n = 11$, in order to beat the trivial $n$-lower bound any conflicting family has size at least~$37$. It is only conjectured that the $n$-lower bound can be beaten for every $n\ge 11$. There are even conjectures made about the size of any conflicting family w.r.t the trivial $n$-lower bound~\cite[(3), p.~129]{BCDEx07}.
%  Il y a aussi un très bon papier sur une borne sup linéaire $2n$ pour certaines classes de graphes planaires~\cite{FSSS23}, bipartite ou cubic planar graphs.

For induced-universal graphs for trees, it is not clear if a lower bound as low as say~$2.1n$ can be achieved. As previously said, the folklore $3\floor{n/2}$-lower bound for trees is achieved by considering a conflicting family % Cyril: je ne défini pas conflicting family, mais une façon élégante de dire "bad" family
of two caterpillars\footnote{Recall that a caterpillar is a tree in which the nodes of degree at least two induced a path. They are also exactly the maximal graphs of pathwidth one.} on $n$ vertices: a star and a path. %This results from the fact that they can share an independent set of at most $\ceil{n/2}$ vertices.
Naively, one may believe that by considering a conflicting family of three trees (instead of two caterpillars) may allow us to easily beat this folklore $3n/2$-lower bound. Unfortunately, one of our results implies that $\sU(\sF) \le 3n/2 + O(\log{n})$ for any family $\sF$ composed of three trees with $n$ vertices. So, at least~$4$ trees need to be considered in any conflicting family w.r.t. the $3n/2$-lower bound. Even worse, we show that in order to beat~$2.1n$, we will need to consider at least~$7$ trees. Clearly, a lower bound proof involving~$7$ trees is far more complicated than one involving~$2$ trees, as pairwise intersections of the trees in an induced-universal graph solution may be hard to control.

\paragraph{Our contributions.}

First, we use some classical Information-Theoretic lower bound to improve the current lower bounds based on conflicting families. This concerns trees, planar and some other families (see \cref{tab:contrib} and \cref{app:tab:lb_subgraphs} columns~$c$). We also develop another lower bound for specific conflicting families containing small cliques (see \cref{app:tab:lb_cliques}). The bound is actually optimal for unions of cliques of bounded size. This captures the folklore $3n/2$-lower bound for trees, and the $11n/6$-lower bound for paths and cycles. As intermediate step, it gives a simple $25n/12$-lower bound for planar graphs by considering cliques of size at most four. Using our Information-Theoretic lower bound, the bound for planar graphs is improved to $10.52n$.

Second, we use block designs and special vertex-coloring to construct small induced-universal graphs for any family $\sF$ with $t$ graphs of $n$ vertices. The result depends on two more integral parameters on $\sF$: $p$ and $k$. More precisely, we show in \cref{th:universal} that, if every graph of $\sF$ has a set of $p$ vertices whose removal leaves a graph that is \emph{equitably $k$-colorable} (a $k$-coloring with same color-class sizes), then
\begin{equation}\label{eq:U}
\sU(\sF) ~\le~ s \cdot \frac{n}{k} + tp ~.
\end{equation}
The number $s = s(t,k)$ is related to some block designs and the Coding-Theoretic Function as defined in \cref{sec:block_design}. Actually, we show in \cref{th:pathwidth} that parameter $p$ is at most the pathwidth of any graph in $\sF$ times the chromatic number $k$ of $\sF$.

For instance, for any family $\sF$ composed of $t = 3$ trees with $n$ vertices, Eq.~\eqref{eq:U} implies $\sU(\sF) \le 3n/2 + O(\log{n})$, since $k = 2$, $p = O(\log{n})$ (trees are bipartite of logarithmic pathwidth), and because $s = 3$ in this case. For $\sF$ composed of $t = 6$ trees, we have $s = 4$, and thus $\sU(\sF) \le 2n + O(\log{n})$. If $\sF$ is composed of $t=20$ planar graphs, $\sU(\sF) \le 4n + O(\!\sqrt{n}\,)$, since in that case $k = 4$, $p = O(\!\sqrt{n}\,)$, and $s = 16$.
% Cyril: Dans le style "elsarticle", il semble qu'il y ait un espace hideux entre O et \sqrt.

From above, at least~$7$ conflicting trees would be required in order to beat a hypothetical $2.1n$-lower bound for trees, as $\sU(\sF) \le 2n + O(\log{n})$ for $t = 6$ trees. Actually, we have showed a lower bound of $1.626n$ for tree families, and $\sU(\sF) \le 3n/2 + O(\log{n})$ for $t = 3$ trees. Thus, at least~$4$ conflicting trees are required in order to beat the $1.626n$-lower bound (cf. the first line of \cref{tab:contrib}). For planar graph families, $k = 4$, $p = O(\!\sqrt{n}\,)$, and $s = 42$ if $t = 136$. So, at least~$137$ conflicting graphs are required in order to beat the $10.52n$-lower bound.

This framework applies to several families of graphs, and the series of new lower bounds we obtained, on $\sU$ and conflicting family size, are summarized in columns $c$ and $t$ of \cref{tab:contrib}.
% \cyril{J'ai enlevé les caterpillar, car on s'en fout un peu que $c=1.293$. Et pour bounded genus, $k$ dépend de $g$, donc $A()$ aussi. Il faudra donc donner une borne inf sur $A()$ ... pas super-pratique.}
%
% forests: k=2, s = \ceil{1.626*2}-1 = 3, t = 1 + A(3,2,2) = 1 + 3 = 4
%
% outerplanar: k=3, s = \ceil{3.275*3}-1 = 9, t = 1 + A(9,4,3) = 1 + 12 = 13
%
% series-parallel: k=3, s = \ceil{3.850*3}-1 = 11, t = 1 + A(11,4,3) = 1 + 17 = 18
%
% planar: k=4, s = \ceil{10.520*4}-1 = 42, t = 1 + A(42,6,4) = 1 + 136
%
% genus-$g$: k = O(\sqrt{g}), s = \ceil{10.520*k}-1 = ..., t = 1 + A(s,2k-2,k) = ...
%
% K_{3,3}-minor-free: k = 5, s = \ceil{10.521*5}-1 = 52, t = 1 + A(52,8,5) \ge 1 + 123 = 124 (incertitude de +1 sur A(52,8,5))
%

%\cyril{Et pour $K_{3,3}$-minor-free, c'est combien le chromatic number? C'est au moins $5$, car ils peuvent contenir $K_5$. Il me semble que c'est aussi au plus $5$, mais j'arrive pas à trouver de refs. En fait, c'est bien $5$ car c'est des clique sum de planaire et de $K_5$ par au plus 3 sommets.}

\begin{table}[htbp!]
\begin{center}
\renewcommand{\arraystretch}{1.05}
\begin{tabular}{cc||c|c||clc}
&$\sF$ & $c$& $t$ & $k$ & $A(\ceil{ck}-1,2k-2,k)$&\\
\hline
&forests & $\phantom{0}1.626$ & $4$ & $2$ &$A(3,2,2) = 3$&\\ 
&outer-planar & $\phantom{0}3.275$& $13$ & $3$& $A(9,4,3) = 12$&\\
&series-parallel & $\phantom{0}3.850$ & $17$ & $3$& $A(11,4,3) = 16$&\\ 
&$K^-_5$-minor-free\myfootnotemark{foot}{1} & $\phantom{0}6.264$ & $51$ & $4$ & $A(25,6,4) = 50$&\\
&planar & $10.520$ & $137$ & $4$ & $A(42,6,4) = 136$&\\
&$K_{3,3}$-minor-free & $10.521$ & $124$ & $5$ & $A(52,8,5) \in\set{123,124}$\myfootnotemark{foot}{2}&\\
\hline
\end{tabular}
\caption{New lower bounds given by \cref{th:lb_subgraphs} for various family $\sF$ of the form $\sU(\sF_n) \ge cn - o(n)$. Here $t$ is a lower bound on the size of a conflicting family needed to beat the corresponding $cn$-lower bound. From \cref{th:conflict_size}, we have $t = 1 + A(\ceil{ck}-1,2k-2,k)$, where $k$ is the chromatic number of $\sF$, and $A$, related to block designs, is the Coding-Theoretic Function defined in \cref{sec:block_design}.}
\label{tab:contrib}
\end{center}
\end{table}
\myfootnotetext{foot}{1}{$K^-_5$ is the graph $K_5$ minus one edge.} \myfootnotetext{foot}{2}{According to \href{https://aeb.win.tue.nl/codes/Andw.html\#d8}{Andries E. Brouwer's web pages}, it is not known whether $A(52,8,5) = 123$ or $124$.}

Motivated by the quest for an $\Omega(n\log{n})$-lower bound for planar graph, we show that a conflicting family w.r.t. to this lower bound (if it exists) must have a size of at least $\Omega(\log^2{n})$. More generally, we show in \cref{th:asuniversal} that any family $\sF$ of $t$ graphs having a small chromatic number and sublinear pathwidth, like any proper minor-closed family as planar or bounded genus graphs, verifies $\sU(\sF_n) < \nicefrac{15}{7}\sqrt{t}\cdot n$.

\paragraph{Organization of the paper.}

\Cref{sec:lower_bounds} presents the union-of-clique lower bound, and the Information-Theoretic lower bound. \Cref{sec:almosteq} gives our construction, as in Eq.~\eqref{eq:U}, using block designs, and its applications to conflicting families. \Cref{sec:towards} presents our upper bound in $O(\!\sqrt{t}\cdot n)$.

%Due to space limitation, proofs are omitted. However, in the Appendix, we have included the full paper (introduction excluded), containing all technical lemmas and proofs.

%%%%%%%%%%%%%%%%%%%%%%%%%%%%%%%%%%%%%%%%%%%%%%%%%%%%%%%%%%%%%%%%%%%%%%%%%%%%%%%%%%%

\section{Lower Bounds}
\label{sec:lower_bounds}

\subsection{Union of cliques}

The first lower bound is based on the union of small cliques. The bounds presented in \cref{th:lb_cliques} are optimal and generalize previously known lower bounds: the folklore lower bound of $\floor{3n/2}$ for acyclic graphs of maximum degree two, and the lower bound of $11\floor{n/6}$ for graphs of maximum degree at most two~\cite[Claim~1]{ELO08}. We remark for this latter that our lower bounds are slightly better. It actually matches ours only for $n$ multiple of six. E.g., if $n = 6i+5$ for integer $i$, $11\floor{n/6} = 11i$ whereas our bound gives $11i+8$.

\begin{restatable}{theorem}{thlbcliques}\label{th:lb_cliques}
  Let $\sK_{n,k}$ be the family of graphs with at most $n$ vertices composed of a disjoint union of cliques each with at most $k$ vertices. Then, $\sU(\sK_{n,k}) = \sum_{i=1}^k \floor{n/i} \ge (n+1) \ln{(k+1)} - k$.
\end{restatable}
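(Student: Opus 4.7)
The plan is to prove the two inequalities $\sU(\sK_{n,k}) \le \sum_{i=1}^k \floor{n/i}$ and $\sU(\sK_{n,k}) \ge \sum_{i=1}^k \floor{n/i}$ separately, and then to obtain the logarithmic lower bound as a routine harmonic-series estimate. For the upper bound, I would exhibit the explicit graph $U^\star = \bigsqcup_{i=1}^k (M_i - M_{i+1})\,K_i$, where $M_i = \floor{n/i}$ and $M_{k+1} := 0$. The multiplicities are nonnegative because $M_i$ is non-increasing, $|V(U^\star)| = \sum_i i(M_i - M_{i+1}) = \sum_i M_i$ by telescoping, and the number of $U^\star$-components of size at least $i$ is exactly $M_i$. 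For any $H = \bigsqcup_j a_j K_j$ in $\sK_{n,k}$, the chain $i \sum_{j \ge i} a_j \le \sum_{j \ge i} j a_j \le n$ gives $\sum_{j \ge i} a_j \le M_i$ for every $i$, so Hall's marriage theorem provides an injection from the clique-components of $H$ to the components of $U^\star$ respecting the size constraint; picking any $j$ vertices inside each assigned $K_\ell$ with $\ell \ge j$ then realises $H$ as an induced subgraph of $U^\star$.

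For the lower bound, let $U$ be any induced-universal graph for $\sK_{n,k}$. Since $M_j K_j \in \sK_{n,k}$ for every $j$, there is a set $W_j \subseteq V(U)$ with $U[W_j] \cong M_j K_j$, decomposing into $M_j$ vertex-disjoint clique-components of size $j$ with no edges between distinct components. Let $\mathcal{C}$ be the collection of all $\sum_j M_j$ such clique-components, taken across all $j$. I plan to construct a system of distinct representatives $f : \mathcal{C} \to V(U)$ with $f(C) \in V(C)$, which forces $|V(U)| \ge |\mathcal{C}| = \sum_j M_j$. The construction is greedy: process the cliques in order of non-decreasing size, and for each $C$ of size $j$ pick as $f(C)$ any vertex of $V(C)$ not already chosen. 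The key structural lemma is that for every $C \in \mathcal{C}$ of size $j$ and every $i < j$, the set $V(C) \cap W_i$ is entirely contained in a single clique-component of $W_i$. Indeed the vertices of $V(C) \cap W_i$ are pairwise adjacent (all inside the clique $C$), and in $W_i \cong M_i K_i$ two adjacent vertices must belong to the same clique-component, for otherwise they would produce a forbidden cross-edge between components. Consequently at most one clique-component of $W_i$ meets $V(C)$, and same-size components of $W_j$ are pairwise vertex-disjoint by construction; so when $C$ is processed, at most $j-1$ previously chosen representatives lie in $V(C)$, leaving a free vertex available for $f(C)$.

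The final inequality then follows from $\floor{n/j} \ge (n+1)/j - 1$ (since $j \floor{n/j} \ge n - (j-1)$) together with $H_k = \sum_{j=1}^{k} 1/j \ge \int_1^{k+1} dx/x = \ln(k+1)$, yielding $\sum_{j=1}^k \floor{n/j} \ge (n+1)\ln(k+1) - k$. The main obstacle I anticipate is pinpointing the structural lemma behind the SDR: the crucial observation is that the \textquotedblleft no cross-edges between components\textquotedblright{} rigidity inherent in any induced $M_i K_i$ forces every clique of $U$ to intersect $W_i$ inside a single component, and this rigidity is precisely what makes the greedy matching tight at $\sum \floor{n/i}$.
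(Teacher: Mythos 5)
Your proof is correct, and the lower-bound half takes a genuinely different and arguably cleaner route than the paper's. For the upper bound, the explicit staircase graph $U^\star$ with multiplicities $M_i - M_{i+1}$, together with a greedy (or Hall-type) size-respecting matching of clique components, is essentially the paper's construction $U_k$ and greedy embedding argument, just packaged slightly differently. The final harmonic-series estimate is also the same.

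For the lower bound, however, the two proofs diverge substantively. The paper argues by induction on $k$: it fixes one embedding of $G_k$, deletes one vertex $x_i$ from each size-$k$ clique, and then shows by a "re-embedding" argument that the remaining graph is still induced-universal for $F_{k-1}$, yielding the recurrence $N_k \ge \floor{n/k} + N_{k-1}$. The delicate point there is showing that a displaced clique $D_i$ can be moved into $C_j \setminus\{x_j\}$ while keeping the union induced, which the paper handles by assuming $U'$ is minimal. Your proof instead works in one shot: you collect all $\sum_j M_j$ clique components across all the embeddings $W_1,\dots,W_k$ and build a system of distinct representatives greedily by non-decreasing size. The load-bearing observation is the same rigidity fact (a clique of $U$ can meet at most one component of any induced $M_i K_i$, because distinct components have no cross-edges), but you use it statically, to bound collisions at processing time, rather than dynamically, to repair an embedding after deletion. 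This makes your argument non-inductive, avoids the minimality hypothesis entirely, and directly delivers $|V(U)| \ge |\mathcal{C}| = \sum_j M_j$. Both approaches reach the same tight bound; yours trades the elegance of the peeling recurrence for a more transparent counting argument.
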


\begin{proof}
    \vspace{-3ex}\paragraph{Lower bound.}

    \def\tG{\tilde{G}}%
    For every $i\in\range{1}{k}$, let $G_i$ be the $n$-vertex graph composed of $\floor{n/i}$ disjoint cliques of $i$ vertices plus $n \bmod i$ isolated vertices, and let $F_i = \range{G_1}{G_i}$. Clearly, $F_i \subseteq \sK_{n,i}$, and thus in particular $\sU(\sK_{n,k}) \ge \sU(F_k)$. Our goal is to show that $\sU(F_k) = \sum_{i=1}^k \floor{n/i}$. For convenience, let $N_i = \sU(F_i)$. Obviously, $N_1 = n$ since $F_1 = \set{G_1}$ and $G_1$ has $n$ vertices. Assume $k\ge 2$. We will show that $N_k \ge \floor{n/k} + N_{k-1}$.
    
    % Cyril: tiret ou pas ? si utilisé comme adjectif, on met un tiret sinon non. Donc "induced subgraph", mais "induced-subgraph embedding" ou "induced-universal graph"
    Let $U$ be a minimum induced-universal graph for $F_k$, thus with $N_k$ vertices. We construct as follows a subgraph $U'$ of $U$ that is an induced-universal graph for $F_{k-1}$ with no more than $N_k - \floor{n/k}$ vertices. First, consider any induced-subgraph embedding of $G_k$ in $U$, and denote by $C_1,\dots,C_{\floor{n/k}}$ be all its cliques embedded in $U$ with $k\ge 2$ vertices. Thus, the subgraph $\bigcup_{i=1}^{\floor{n/k}} C_i$ is an induced union of cliques of $U$, each with $k\ge 2$ vertices. Second, for each $C_i$, select an arbitrary vertex $x_i$. Finally, we set $U' = U\setminus X$, where $X = \range*{x_1}{x_{\floor{n/k}}}$. Note that $X$ is a stable set in $U$, because there are no edges in $U$ between any two distinct cliques of $\bigcup_{i=1}^{\floor{n/k}} C_i$.
    
    Clearly, $\cardV{U'} = \cardV{U} - |X| = N_k - \floor{n/k}$. It remains to show that $U'$ is an induced-universal graph for $F_{k-1}$. 
    Consider $G_t \in F_{k-1}$, for some $t<k$, and denote by $D_1,D_2,\dots$ be all its clique components that are embedded in $U$. The subgraph $\bigcup_{i} D_i$ is an induced union of cliques of $U$, each with $1$ or $t$ vertices. Each clique $D_i$ can intersect $X$ in at most one vertex, because $X$ is a stable set. Suppose that $D_i$ intersects $X$ in vertex $x_j$ for some $i,j$. The key point is that the clique $C_j$ containing $x_j$ intersects no other $D_m$ with $m\neq i$. Otherwise, $D_i \cup D_m$ would be not an induced union of disjoint cliques in $U$. Also, we have $\cardV*{C_j} = k > t = \cardV{D_i}$. It follows that, for each such clique $D_i$, one can move its embedding into $C_j \setminus\set*{x_j}$ instead of $D_i$ while preserving that their union is disjoint. One can check that their union is also induced in $U$ if $U'$ is \emph{minimal}, i.e., with no proper subgraphs that is induced-universal for the same family. So, this new embedding is preserved as well in $U\setminus\set*{x_j}$, since $x_j$ is not useful anymore. By applying this embedding transformation for each clique $D_i$ intersecting $X$, we have shown that $G_t$ is induced in $U' = U\setminus X$. This applies for each $G_t \in F_{k-1}$.
    
    Therefore, $U'$ is an induced-universal for $F_{k-1}$, and thus $N_{k-1} \le \cardV{U'} \le N_k - \floor{n/k}$. In other words, we have established that:
    \begin{eqnarray*}
        \forall k\ge 2, \quad N_k &\ge& \floor{\frac{n}{k}} + N_{k-1} ~.
    \end{eqnarray*}
    By iterating the previous inequality down to $k=2$, and using the fact that $N_1 = n$, we get:
    \[
        \sU(\sK_{n,k}) ~\ge~ N_k ~\ge~ \sum_{i=1}^k \floor{\frac{n}{i}} ~.
    \]
    We remark\footnote{\label{foot:a/b}Using the fact that $\floor{a/b}+1 \ge (a+1)/b$ for integers $a$ and $b \neq 0$.} that $\sum_{i=1}^k \floor{n/i} \ge \sum_{i=1}^k ((n+1)/i - 1) = (n+1) \cdot (\sum_{i=1}^k 1/i) - k \ge (n+1) \cdot \int_{1}^{k+1} \frac{1}{x} dx - k$, and thus $\sU(\sK_{n,k}) \ge (n+1)\ln{(k+1)} -k$.
    
    \paragraph{Upper bound.}
    
    For every  $i\in\range{1}{k}$, we inductively construct a graph $U_i$ as follows. For $i=1$, we let $U_1$ be a stable set with $n$ vertices. Then, for $i\ge 2$, assuming that $U_{i-1}$ contains $\floor{n/(i-1)}$ disjoint cliques of $i-1$ vertices, the graph $U_i$ is obtained from a copy of $U_{i-1}$ in which we replace $\floor{n/i}$ cliques of $i-1$ vertices each by a clique of $i$ vertices. In other words, we increase by one vertex $\floor{n/i}$ cliques taken among the $\floor{n/(i-1)}$ cliques of $i-1$ vertices of $U_{i-1}$. So, $U_i$ has exactly $\floor{n/i}$ vertices more than $U_{i-1}$, and it contains $\floor{n/i}$ disjoint cliques of $i$ vertices. Moreover, $U_i$ contains $U_j$ as induced subgraph for each $j\le i$. Thus, $U_i$ contains $\floor{n/j}$ disjoint cliques of at least $j$ vertices.
    
    In particular, $\cardV{U_k} = \floor{n/k} + \cardV{U_{k-1}} = \sum_{i=1}^k \floor{n/k}$, and $U_k$ contains $\floor{n/j}$ disjoint cliques of at least $j$ vertices, for every $j\in \range{1}{k}$. Also observe that $U_k$ has a total of $n$ disjoint cliques. Denoted by $C_i$ be the $i$th largest clique of $U_k$.
    
    Let us show that $U_k$ is induced-universal for $\sK_{n,k}$. Consider a graph $G \in \sK_{n,k}$, and let $D_1,\dots,D_t$ be its $t\le n$ disjoint cliques ordered by non-increasing size. The cliques of $G$ are embedded in $U_k$ in a greedy way: $D_i$ is embedded into $C_i$, for each $i = 1,2,\dots,t$.
    
    To show that this embedding does not fail, it is sufficient to show that $\cardV{C_i} \ge \cardV{D_i}$. Let $d_i = \cardV{D_i}$. Since $G\in \sK_{n,k}$, $d_i \in\range{1}{k}$. The main observation is that $d_i \le \cardV{G}/i \le n/i$. This is because cliques of $G$ are considered by non-increasing order. We have seen that $U_k$ contains $\floor{n/j}$ disjoint cliques with at least $j$ vertices for every $j \in \range{1}{k}$. So, since $d_i\le n/i$, $U_k$ contains $\floor{n/d_i} \ge \floor{n/(n/i)} = i$ disjoint cliques with at least $d_i$ vertices. Therefore, the $i$th largest cliques in $U_k$ has at least $d_i$ vertices, that is $\cardV{C_i} \ge d_i = \cardV{D_i}$ as claimed.
\end{proof}

\begin{myfigure}
    \def\A#1{% K4
        \B{#1}
        \node (z) at ($(u)+(0.5,0.30)$) {};
        \draw (u) -- (z) -- (v);
        \draw (z) -- (w);
    }
    \def\B#1{% K3
        \C{#1}
        \node (w) at ($(u)+(0.5,0.78)$) {};
        \draw (v) -- (w) -- (u);
    }
    \def\C#1{% K2
        \node (u) at #1 {};
        \node (v) at ($(u)+(1,0)$) {};
        \draw (u) -- (v);
    }
    \begin{tikzpicture}
    \tikzset{every node/.style={vertex}}
    \foreach \i in {0,1,2,3,4,5} { \A{(1.5*\i,0)} }
    \foreach \i in {0,1} { \B{(1.5*\i,-1)} }
    \foreach \i in {2,3,4,5} { \C{(1.5*\i,-1)} }
    \foreach \i in {0,1,2,3,4,5} { \node at (1.5*\i,-1.5) {}; }
    \foreach \i in {0,1,2,3,4,5} { \node at (1.5*\i,-2) {}; }
  \end{tikzpicture}
  \caption{The minimum induced-universal graph for $\sK_{24,4}$, the family of graphs with at most~$24$ vertices that are union of disjoint cliques with at most $4$ vertices. Its has $\sU(\sK_{24,4}) = 24 \cdot (1 + 1/2 + 1/3 + 1/4) = 50$ vertices.}
  \label{fig:K24_4}
\end{myfigure}

Note that graphs in $\sK_{n,2}$ are forests (it is actually a matching), graphs in $\sK_{n,3}$ have maximum degree at most two and thus are outer-planar, and $\sK_{n,4}$ are planar. Thus, for these families, for infinitely many $n$ (for $n$ divisible by $k!$), we have respectively 
% Cyril: n H(k) = n/k! * (1! + 2! + ...+ k!)
$3n/2$, $11n/6$, and $25n/12$ as lower bounds on the number of vertices for their minimum induced-universal graphs (see \cref{app:tab:lb_cliques}). However, for forests, outer-planar and planar graphs, the next lower bounds, as presented in \cref{th:lb_subgraphs}, will be better (see also \cref{app:tab:lb_subgraphs}).

\begin{table}[htpb!]
\begin{center}
\renewcommand{\arraystretch}{1.05}
\begin{tabular}{cc|ll}
%\hline
& $\sF$ & lower bounds on $\sU(\sF_n)$&\\
\hline
&caterpillar forests $\supset \sK_{n,2}$ &  $3n/2$&\\ 
&union of cycles and paths $\supset \sK_{n,3}$ & $11n/6$&\\
&planar $\supset \sK_{n,4}$ & $25n/12$&\\
&union of cliques with $\le k$ vertices $= \sK_{n,k}$ & $(n+1)\ln(k+1) - k$&\\
&union of cliques $= \sK_{n,n}$ & $(n+1)\ln(n+1) - n$&\\
\hline
\end{tabular}
\caption{Summary of lower bounds derived from \cref{th:lb_cliques}.}
\label{app:tab:lb_cliques}
\end{center}
\end{table}

\subsection{Information-Theoretic lower bound}

Given an infinite graph family $\sF$, the \emph{unlabeled constant-growth} of $\sF$ is the following number (if it exists),
\[
  g(\sF) ~=~ \limsup_{n\to \infty}\pare{u_n}^{1/n}
\]
where $u_n$ counts the number of unlabeled graphs of $\sF$ having $n$
vertices.

\begin{restatable}{theorem}{thlbsubgraphs}\label{th:lb_subgraphs}
  Let $\sF$ be a infinite graph family with unlabeled constant-growth at least\footnote{Here $e = \exp(1) = 2.7182818284...$.} $g > e/2$.
  For every $0 < \eps < g/e - \nicefrac{1}{2}$, and for infinitely many $n$, $\sU(\sF_n) > (c-\eps)n$, where $c$ is solution of $c^c / (c-1)^{c-1} = g$. We have $c \ge g/e + \nicefrac{1}{2}$. Moreover, if $\lim_{n\to \infty} (u_n)^{1/n}$ does exist, then $\sU(\sF_n) \ge cn$ for every large enough $n$.
\end{restatable}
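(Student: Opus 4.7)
The plan is an information-theoretic counting argument. Set $N := \sU(\sF_n)$ and let $U$ be a minimum induced-universal graph for $\sF_n$. Each unlabeled graph $H$ of $\sF_n$ is realized as $U[S]$ for some $n$-subset $S \subseteq V(U)$; since $S$ determines the isomorphism type of $U[S]$, distinct unlabeled graphs must correspond to distinct subsets. Hence
\[
u_n \;\le\; \binom{N}{n}, \qquad\text{so}\qquad (u_n)^{1/n} \;\le\; \binom{N}{n}^{1/n}.
\]

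Next I would analyze the right-hand side via Stirling's formula: if $N = \alpha n$ with $\alpha > 1$ fixed, then $\binom{\alpha n}{n}^{1/n} \to f(\alpha) := \alpha^\alpha/(\alpha-1)^{\alpha-1}$ as $n \to \infty$. The function $f$ is continuous and strictly increasing on $(1,\infty)$, since its logarithmic derivative is $\log(\alpha/(\alpha-1)) > 0$, and by hypothesis $g = f(c)$ defines $c$ uniquely (with $c > 1$ precisely when $g > e/2$, which is assumed).

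From here the main bound follows by contradiction. Suppose $N \le (c-\eps) n$ for all $n$ beyond some threshold. Then $\binom{N}{n}^{1/n} \le \binom{(c-\eps)n}{n}^{1/n} \to f(c-\eps)$, and $f(c-\eps) < f(c) = g$ by strict monotonicity of $f$. This contradicts $\limsup_n (u_n)^{1/n} = g$. Hence $N > (c-\eps) n$ for infinitely many $n$, proving the first assertion. If moreover the actual limit $\lim_n (u_n)^{1/n}$ exists and equals $g$, then $(u_n)^{1/n} > g - \eps'$ for all sufficiently large $n$ and every $\eps' > 0$, and the same contradiction argument yields $N \ge c n$ for every sufficiently large $n$ by taking $\eps \to 0$.

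For the explicit lower estimate $c \ge g/e + \nicefrac12$, I would use the integral representation coming from $(x \log x)' = \log x + 1$:
\[
\log f(c) \;=\; c\log c - (c-1)\log(c-1) \;=\; 1 + \int_{c-1}^{c} \log x \, dx.
\]
Strict concavity of $\log$, via Jensen's inequality on the unit interval $[c-1,c]$, gives $\int_{c-1}^c \log x \, dx \le \log(c-\nicefrac12)$, so $\log f(c) \le 1 + \log(c-\nicefrac12)$, i.e., $g = f(c) \le e \cdot (c-\nicefrac12)$, and rearranging yields $c \ge g/e + \nicefrac12$. The main subtle point is carefully separating the $\limsup$ case from the $\lim$ case to justify both halves of the conclusion; the rest reduces to a Stirling asymptotic on $\binom{\alpha n}{n}$ and a one-line Jensen inequality.
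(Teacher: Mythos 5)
Your proof follows the same core counting argument as the paper: bound $u_n$ by $\binom{N}{n}$, recognize $\binom{\alpha n}{n}^{1/n}$ as (asymptotically) the entropy-like function $f(\alpha) = \alpha^\alpha/(\alpha-1)^{\alpha-1}$, and invert. The two places you diverge are worth noting. First, for the binomial estimate the paper uses the clean \emph{non-asymptotic} inequality $\binom{a}{b} < f(a/b)^b$ valid for all $0<b<a$, whereas you invoke Stirling asymptotics; your route is fine but needs a floor around $(c-\eps)n$ and gives a slightly less self-contained derivation. Second, and more interestingly, your proof of $c \ge g/e + \nicefrac{1}{2}$ via the identity $\log f(c) = 1 + \int_{c-1}^c \log x\,dx$ and Jensen (concavity of $\log$ on an interval of length one) is genuinely cleaner and more conceptual than the paper's approach, which analyzes the second derivative of $f$ and the asymptotics $f(x)\to ex-e/2$. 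One small inaccuracy to fix: you write that "$c>1$ precisely when $g>e/2$," but since $f(1^+)=1$ and $f$ is increasing, the correct threshold is $g>1$; the hypothesis $g>e/2$ is needed so that the window $0<\eps<g/e-\nicefrac{1}{2}$ for $\eps$ is nonempty, not to ensure $c>1$. Finally, your treatment of the case where the limit exists ("taking $\eps\to 0$") is a little loose — the paper's phrasing has the same roughness — and would benefit from spelling out: if $N\le cn$ for infinitely many large $n$, then $(u_n)^{1/n}<f(c)=g$ along that subsequence, contradicting $\lim(u_n)^{1/n}\ge g$, provided the limit exceeds $f(c)$ or one argues about the strictness carefully.
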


\begin{proof}
Consider any minimum induced-universal graph $U$ for $\sF_n$, let $N = \sU(\sF_n)$ be its number of vertices, and let $u_n$ be the number of unlabeled graphs in $\sF_n$.

%\amaury{If $c-\eps \le 1$, since the growth of the family is exponential, there exists infinitely many $n$ with $u_n > 1$. For those $n$, $\sU(\sF_n) \ge 1\cdot n + 1 > (c-\eps)n$.\\ From here, we consider $c-\eps > 1$ and $c \ge g/e + (z$ therefore $g/e - \nicefrac{1}{2} > \eps$. }

From the universality of $U$, every graph of $\sF_n$ is isomorphic to an $n$-vertex induced subgraph of $U$. So, the number of pairwise non-isomorphic $n$-vertex induced subgraphs in $U$ must be an upper bound on $u_n$. Such induced subgraphs can be obtained by selecting a subset of $n$ vertices taken among the $N$ of $U$. Thus, there are at most $\binom{N}{n}$ induced subgraphs in $U$ having $n$ vertices. It follows that $u_n \le \binom{N}{n}$.

We have the well-known upper bound for the binomial numbers (see~\cite[p.~255]{Bollobas78} for instance):
\[
\forall\, 0 < b < a, \quad \binom{a}{b} ~<~ \pare{\frac{a}{b}}^b \pare{\frac{a}{a-b}}^{a-b} = \pare{\alpha\pare{\frac{\alpha}{\alpha-1}}^{\alpha-1}}^b ~=~ f(\alpha)^b
\]
where $\alpha = a/b > 1$ and $f: x\mapsto x^x / (x-1)^{x-1}$ is related to the entropic-function. % Cyril: c'est pas exactement la fonction entropie.

Assume $N \le cn$ for all $n$ and some constant $c>1$. We have, for every $n\ge 1$:
\begin{eqnarray*}
    u_n &\le& \binom{N}{n} ~\le~ \binom{cn}{n} ~<~ f(c)^n\\
    \Rightarrow \quad \pare{u_n}^{1/n} &<& f(c) ~.
\end{eqnarray*}
% Cyril: Il faut faire gaffe, car $\limsup_n$ est une fonction décroissante de $n$.
The latter inequality implies that $\limsup_{n\to \infty} (u_n)^{1/n} \le f(c)$, since\footnote{This is because, for every sequence $x_n$, $\limsup_{n\to \infty} x_n = \inf_{n\ge 0}\set{\sup_{m\ge n} x_m}$.} $f(c)$ is a constant independent of $n$. In other words, if $N\le cn$ for every $n$ and $c>1$, then $g \le g(\sF) \le f(c)$, where $g$ is a lower bound on the unlabeled constant-growth of $\sF$. A basic property of $\limsup$ is that, for every sequence $x_n$, if $\limsup_{n\to\infty} x_n > \ell$, for some constant $\ell$, then, $x_n > \ell$ for infinitely many $n$. It turns out that if $g > f(c)$, then $(u_n)^{1/n} > f(c)$, and thus $N > cn$ for infinitely many $n$.

One can check that the derivative of $f$ is $f'(x) = f(x) \ln{(x/(x-1))} > 0$ for $x>1$,
% Cyril Il y a une forme indéterminée $0^0$ pour $x = 1$. Attention aussi au fait que pour $f(x)$ n'est pas définie pour $x<1$. Par exemple, $f(0.5) = i/2$ est complexe :((
and that $f(x) \to ex - e/2$ when $x\to\infty$ (using the facts that $\lim f'(x) = e$ and $\lim f(x) - ex = -e/2$). In fact, analyzing the second derivative of $f$, we have $f(x) \le ex - e/2$ for all $x > 1$. % Cyril: la dérivée seconde ne change pas de signe, donc elle reste en dessous.
In particular, $f(x)$ is non-decreasing for $x > 1$, and the real solution of the equation $f(c) = g$ is some $c_0$ satisfying $g \le e c_0 -e/2$, i.e., $c_0 \ge g/2 + \nicefrac{1}{2}$. % Cyril: en mettant "real solution", cela impose que $c_0>1$, sinon cela ne coule pas de sens.

We have $0 < \eps < g/e - \nicefrac{1}{2}$ by assumption. Let $c = c_0 - \eps$. Since $c_0 \ge g/2 + \nicefrac{1}{2}$, we have $c \ge g/e + \nicefrac{1}{2} - \eps > g/e + \nicefrac{1}{2} - (g/e - \nicefrac{1}{2}) > 1$. So, $f(c) = f(c_0 - \eps) < g$ as $f(x)$ is non-decreasing for $x>1$, which in turns implies that is $\sU(\sF_n) = N > (c_0-\eps)n$ for infinitely many $n$.

Now assume that $(u_n)^{1/n}$ has a limit, which must be $g(\sF)$. Following the same argument as above, if $N \le cn$ for every large enough $n$, then $(u_n)^{1/n} < f(c)$. In particular, $\lim_{n\to\infty} (u_n)^{1/n} \le f(c)$, and thus $g \le g(\sF) \le f(c)$. In other words, if $g \ge f(c)$, then $N \ge cn$ for every large enough $n$. Therefore, for $c$ solution of $f(c) = g$, we indeed have $N \ge cn$ for every large enough $n$.

This completes the proof.
\end{proof}

At first glance, the condition "for infinitely many $n$" in the lower bound of \cref{th:lb_subgraphs} is not satisfactory, as we may expected stronger condition like "for every $n$, $\sU(\sF_n) \ge (c-\eps)n$" or "$\sU(\sF_n) \ge cn - o(n)$". However, we observe that there are graph families where such a stronger condition cannot be true. This is the case of cubic planar graphs since there are no cubic graphs with an odd number of vertices. Nevertheless, the lower bound $\sU(\sF_n) \ge cn - o(n)$ holds for all the families considered in Table~\ref{app:tab:lb_subgraphs}, as $\lim_{n\to\infty} (u_n)^{1/n}$ exists for all of them.

% Cyril: la preuve originale est contenue dans la feuille maple groth-rate.mw

Now, using known results on counting graphs for various families, and combined with \cref{th:lb_subgraphs}, we get the following set of new lower bounds (cf. \cref{app:tab:lb_subgraphs}):

\begin{table}[ht]
\begin{center}
\renewcommand{\arraystretch}{1.05}
\begin{tabular}{cc|cc|cc}
%\hline
&$\sF$ & $g$ & refs & $c$&\\
\hline
&caterpillar forests\protect\footnotemark{} & $2\phantom{.000*}$ &\cite{HS73}& $\phantom{0}1.293$&\\
&forests & $\phantom{0}2.9557\phantom{*}$ &\cite{Otter48}& $\phantom{0}1.626$&\\ 
&outer-planar & $\phantom{0}7.5036\phantom{*}$ &\cite{BFKV07}& $\phantom{0}3.275$&\\
&series-parallel & $\phantom{0}9.0733^*$ &\cite{BGKN07}& $\phantom{0}3.850$&\\ 
&$K^-_5$-minor-free & $15.65^*\phantom{00}$ &\cite{GN09a}& $\phantom{0}6.264$&\\
&planar & $27.2268^*$ &\cite{GN09a}& $10.520$&\\
&bounded genus & $27.2268^*$ &\cite{McDiarmid08}& $10.520$&\\ % In particular we show that the labelled graphs embeddable on S have the same growth constant as for planar graphs
&$K_{3,3}$-minor-free & $27.2293^*$ &\cite{GGNW08}& $10.521$&\\
\hline
\end{tabular}
\caption{Lower bounds for various family $\sF$ of the form $\sU(\sF_n) \ge cn - o(n)$, where $c$ is the solution of $g = c^c/(c-1)^{c-1}$ and where $g$ is a lower bound on the unlabeled constant-growth of $\sF$. All figures in the numbers are correct (not rounded). Starred numbers correspond to the \emph{labeled} constant-growth of $\sF$ defined as $\limsup_{n\to\infty}{(\ell_n/n!)^{1/n}}$, where $\ell_n$ counts the number of labeled graphs of $\sF$ having $n$ vertices, a lower bound on the unlabeled constant-growth of $\sF$. }
\label{app:tab:lb_subgraphs}
\end{center}
\end{table}
\footnotetext{Improved by \cref{th:lb_cliques}.}

The labeled constant-growth of other families can be founded in~\cite{GN09a,McDK12,NRR20}. An intriguing question, we left open, is to know whether there are hereditary families $\sF$ with unlabeled constant-growth $g$ for which the lower bound given by \cref{th:lb_subgraphs} is tight.

%%%%%%%%%%%%%%%%%%%%%%%%%%%%%%%%%%%%%%%%%%%%%%%%%%%%%%%%%%%%%%%%%%%%%%%%%%%%%%%%%%%

\section{Almost Equitable Coloring}
\label{sec:almosteq}

The main idea in our construction of small induced-universal graph in \cref{th:universal} is to reuse large stable sets found in each graph of the family. For this purpose, one need these stable sets have similar size.

Recall that a $k$-coloring is \emph{equitable} if the number of vertices in two color classes differ by at most one. In other words, if $G$ has an equitable $k$-coloring, then $V(G)$ has a partition into stable sets $C_1,\dots,C_k$ such that $|C_i| \in \set{\ceil{n/k},\floor{n/k}}$ for each $i$, where $n = \cardV{G}$. Note that $|C_i| = 0$ for some $i$ implies that $|C_j| \in \set{0,1}$ for all $j$.

It is well-known that any $n$-vertex graph with maximum degree $\Delta$ has an equitable $(\Delta+1)$-coloring, as proved by Hajnal and Szemer{\'e}di (see~\cite{KK08a} for a shorter proof), that can be computed in time $O(\Delta n^2)$~\cite{KKMS10}. It is conjectured that in fact every connected graph is equitably $(\Delta+1)$-colorable~\cite{Meyer73}, if it is neither a clique nor an odd cycle. This has been confirmed for many graph classes. In particular, every $4$-colorable graph of maximum degree~4 has an equitable $4$-coloring~\cite{KK12a}.

We point out that there are graphs with equitable $k$-coloring but no equitable $(k+1)$-coloring, e.g. a $2$-coloring of $K_{3,3}$. There are also bipartite graphs with no equitable $2$-coloring, for instance $K_{p,q}$ with $q > p+1$. Even worst for $p=1$, $K_{1,q}$ has an equitable $k$-coloring if and only if $k\ge 1 + q/2$. However, by removing $p$ vertices, we can obtain a graph that has an equitable $2$-coloring, and even so an equitable $1$-coloring.

This leads to the following extension of equitable coloring that will be the crux for our main result. % Cyril: c'est pas vraiment une extension de equitable coloring, mais plutôt de equitably $k$-colorable graphs. Mais bon%, comme on a pas définit equitably $k$-colorable graphs ...
See \cref{app:fig:1-almost-equitable} for an illustration.

\begin{restatable}{definition}{defalmost}\label{def:almost_equitable}
%\begin{definition}\label{def:almost_equitable}
  A graph $G$ is \emph{$p$-almost equitably $k$-colorable} if there exists a set $X$ of at most $p$ vertices such that $G \setminus X$ has an equitable $k$-coloring. %Such a $k$-coloring is said $|X|$-almost equitable. 
%\end{definition}
\end{restatable}

\begin{myfigure}
  \tikzstyle{mypoint}=[inner sep=1.4pt, minimum size=7pt]
  \tikzstyle{myrect}=[minimum width=0pt,minimum height=0pt]
  \tikzset{green_node/.append style={mypoint}}
  \tikzset{red_node/.append style={mypoint}}
  \tikzset{red_square/.append style={myrect}}
  \tikzset{blue_square/.append style={myrect}}
  \tikzset{green_square/.append style={myrect}}
  \ctikzfig{1-balanced-cat_2}
  \caption{A caterpillar of~$29$ vertices with a $2$-coloring that is not equitable (18 circle vs. 11 squared vertices). In fact, it has no equitable $2$-coloring, but it has a $1$-almost equitable $2$-coloring (14 green and 14 red vertices) obtained by removing the squared vertex in the meddle of the path.}
  \label{app:fig:1-almost-equitable}
\end{myfigure}

So, equitably $k$-colorable graphs, i.e., the graphs having an equitably $k$-coloring, are exactly those that are $0$-almost equitably $k$-colorable. Clearly, every graph $G$ with $n$ vertices is $n$-almost equitably $k$-colorable, $G\setminus X$ has only $k$ vertices which can be (equitably) $k$-colored anyway.
% \Cyril{On a pas définit la notion de $p$-almost equitable $k$-coloration. C'est parce qu'il n'est pas très clair s'il faut imposer que l'ensemble à supprimer doit être de taille exactement $p$ (comme exactement $k$ dans la coloration), ou au plus $p$. }
%\Cyril{Point de vigilance. Il semble que dans la preuve du \cref{th:universal} on utilise le fait que si $G$ est $p$-almost equitable $k$-colorable, alors il existe un ensemble $X$ de taille $p$ exactement. C'est crucial pour avoir des stables de pile $\ceil{(n-p)/k}$ sommets. Cela à l'air vrai en supprimant par exemple les couleurs les plus fréquentes. Mais on pourrait ne plus avoir $k$ couleurs ... par exemple si $n-p<k$. Ceci dit, si $n-p<k$, alors on ne peut avoir $k$ couleurs (disons que certains $C_i = \emptyset$), mais c'est bien vrai que $\max_i |C_i| = \ceil{(n-p)/k} = 1$, sauf peut être si $n-p = 0$.}
It is also straightforward to see that:

\begin{claim}\label{claim:equal}
If $G$ has $n$ vertices and is $p$-almost equitably $k$-colorable with $k\le n$, then there exists a set $X$ with exactly $\min\set{p,n-k}$ vertices such that $G\setminus X$ has an equitable $k$-coloring.
\end{claim}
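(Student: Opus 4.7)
The plan is to split on which of $p$ and $n-k$ is smaller, treating the case $\min\{p,n-k\} = n-k$ trivially and the case $\min\{p,n-k\} = p$ by a one-vertex-at-a-time extension argument.

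In the first case, when $p \ge n-k$, take $X$ to be \emph{any} subset of $V(G)$ of size exactly $n-k$; the claim's hypothesis of $p$-almost equitable $k$-colorability is not even needed here. Indeed, the remaining set $G \setminus X$ has exactly $k$ vertices, and assigning each of them a distinct color from $\{1,\dots,k\}$ is automatically a proper $k$-coloring (no two vertices share a color) and trivially equitable (each class has size $1$).

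In the second case, when $p < n-k$, the hypothesis yields a set $X_0$ with $|X_0| = q \le p$ such that $G \setminus X_0$ admits an equitable $k$-coloring $\varphi$. The plan is to extend $X_0$ to a set $X$ of size exactly $p$ by iteratively picking off one vertex at a time from a currently largest color class, and restricting $\varphi$ to the surviving vertices. Each such restriction is obviously still a proper $k$-coloring; the substantive check is that equitability is preserved at every step. This reduces to a short arithmetic verification: if the current graph has $m$ vertices with class sizes in $\set{\lceil m/k \rceil, \lfloor m/k \rfloor}$, then removing a vertex from a class of size $\lceil m/k \rceil$ yields class sizes in $\set{\lceil (m-1)/k\rceil, \lfloor (m-1)/k \rfloor}$, splitting on whether $m \equiv 0 \pmod k$ or not.

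The other point to check is that the procedure never gets stuck: throughout the process the residual graph has more than $n-p > k$ vertices, so the largest color class has size at least $\lceil (n-p)/k\rceil \ge 2$, hence is non-empty and a removal target always exists. After exactly $p - q$ iterations one obtains $X$ with $|X| = p = \min\{p, n-k\}$ and $G \setminus X$ equitably $k$-colored, as required. I do not foresee a serious obstacle: the only mild subtlety is the arithmetic verifying that peeling the largest class preserves equitability, and that is essentially a two-line case analysis.
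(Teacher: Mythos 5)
Your proof is correct and follows essentially the same idea as the paper's (one-line) justification: start from a witness set $X_0$ with $|X_0|\le p$ and grow it one vertex at a time, always removing from a currently largest color class. The paper states this in a single sentence without casing; you make the same argument explicit, add a clean separate treatment of the boundary case $p\ge n-k$ (where one can take an arbitrary $X$ of size $n-k$ and color the remaining $k$ vertices distinctly), and spell out the arithmetic showing that peeling from a largest class keeps the class sizes within $\set{\floor{(m-1)/k},\ceil{(m-1)/k}}$. These are only presentational refinements of the same argument, not a different route.
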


Indeed, if $|X| < \min\set{p,n-k}$, one can successively increase $X$ set by taking one vertex at each step from the stable set with the current most frequent color. We will use \cref{claim:equal} twice in this section.

\subsection{Block design based construction}
\label{sec:block_design}

\cref{th:universal} below relies on the well-known Coding-Theoretic Function $A(n,d,w)$ that is the maximum number of binary words of $n$ bits that are pairwise at distance\footnote{I.e., the Hamming distance.} at least $d$ and of weight\footnote{I.e., the number of 1's in the binary word.} $w$. For instance, with the bijection between binary words of length $n$ to subsets of $\range{1}{n}$, $A(n,2,2)$ can be seen as the number of distinct pairs of integers taken from $\range{1}{n}$, since to be at distance at least two, binary words of weight two can share at most one~1. Therefore, $A(n,2,2) = \binom{n}{2}$. More generally, for each integer $k\ge 2$, $A(n,2k-2,k)$ is the maximum number of edge-disjoint copies of $K_k$ taken in $K_n$. Such clique partitions are also called \emph{block designs} or \emph{incomplete block designs}, and are parts of Combinatorial Designs field~\cite{CD07}.

Not all the values of $A(n,d,w)$ are known, and we refer to~\cite{BSSS90,CD07} for best known bounds on $A(n,d,w)$. However, by counting edges of the cliques, it is easy to see that $A(n,2k-2,k) \le \Bdiv{n}{k}$. The equality holds if and only if a Steiner System $S(2,k,n)$ exists (for instance, see~\cite[Theorem~7]{BSSS90}). E.g., the equality holds for any $a \in\mathbb{N}$, for any prime power $k$ and $n = k^a$, or, for any $k$ power of two and $n = (k-1) 2^{a} + k$ (cf.~\cite[Theorem~13, Eq.~(26{\&}29)]{BSSS90}.

By combining block designs and almost equitable coloring, we can show:

\begin{restatable}{theorem}{thuniversal}\label{th:universal}
  Let $s,t,k \in\mathbb{N}$ such that $t\le A(s,2k-2,k)$. For every family $\sF$ with $t$ graphs, each being $p$-almost equitably $k$-colorable with $n$ vertices,
  \[
    \sU(\sF) ~\le~ s \ceil{\frac{n-p}{k}} + t p ~.
  \]
\end{restatable}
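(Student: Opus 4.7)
The plan is to build a single graph $U$ with exactly $s\lceil (n-p)/k\rceil + tp$ vertices and overlay on it $t$ injective copies of the graphs of $\sF$, using the block design as a conflict-avoiding scaffold. The hypothesis $t \le A(s,2k-2,k)$ supplies $t$ edge-disjoint copies of $K_k$ inside $K_s$, equivalently $k$-subsets $B_1,\dots,B_t \subseteq \{1,\dots,s\}$ with $|B_j \cap B_{j'}| \le 1$ whenever $j \ne j'$. These $B_j$'s will tell us which ``slots'' of $U$ are used by each $G_j$.

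For each $G_j \in \sF$, \cref{claim:equal} furnishes a set $X_j \subseteq V(G_j)$ of size $\min\{p,n-k\}$ together with an equitable $k$-coloring $C_1^j,\dots,C_k^j$ of $G_j \setminus X_j$, each class having at most $m := \lceil (n-p)/k\rceil$ vertices. Fix an ordering $B_j = \{i_1^j,\dots,i_k^j\}$. Let $U$ be the disjoint union of $s$ \emph{slots} $V_1,\dots,V_s$ of $m$ vertices each and $t$ \emph{private sets} $Y_1,\dots,Y_t$ of $p$ vertices each, giving $|V(U)| = sm + tp$, as required. For each $j$ pick an injection $\phi_j : V(G_j) \to V(U)$ sending $C_a^j$ into $V_{i_a^j}$ for every $a \in \{1,\dots,k\}$ and sending $X_j$ into $Y_j$. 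Then declare $\phi_j(u)\phi_j(v) \in E(U)$ for every $uv \in E(G_j)$; doing this simultaneously for all $j$ defines $E(U)$.

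The remaining task, and the main obstacle, is to show that no overlay introduces spurious edges, i.e.\ that $U[\phi_j(V(G_j))] \cong G_j$ for every $j$. Within any slot $V_i$ no edge is ever declared, because every $\phi_{j'}$ touching $V_i$ maps a single stable color class into it, in harmony with $C_a^j$ being stable in $G_j$. Between two distinct slots $V_i$ and $V_{i'}$, an edge can be declared only by some $\phi_{j'}$ with $\{i,i'\} \subseteq B_{j'}$, and the block-design property $|B_j \cap B_{j'}| \le 1$ forces at most one such $j'$; when $\{i,i'\} \subseteq B_j$ this unique contributor is $j$ itself, so the edges between $\phi_j(C_a^j)$ and $\phi_j(C_{a'}^j)$ come entirely from $\phi_j$ and faithfully reproduce the $G_j$-bipartite structure between $C_a^j$ and $C_{a'}^j$. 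Finally, $Y_j$ is touched only by $\phi_j$, so edges incident to $\phi_j(X_j)$ within $\phi_j(V(G_j))$ precisely mirror the $G_j$-edges incident to $X_j$, while private sets with distinct indices are never joined. Taken together, these observations give $U[\phi_j(V(G_j))] \cong G_j$ and finish the proof; the heart of the argument is the interplay between the equitable coloring, which respects the slot size $m$, and the block-design condition, which guarantees that each pair of slots is ``owned'' by at most one graph of the family.
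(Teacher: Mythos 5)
Your construction mirrors the paper's own: both use the block design to select $t$ $k$-subsets of the $s$ slots with pairwise intersection at most one, map the equitable color classes of each $G_j\setminus X_j$ injectively into its $k$ slots of size $\lceil(n-p)/k\rceil$, reserve a private set of $p$ vertices per graph for the removed set $X_j$, declare exactly the edges of each $G_j$, and invoke the one-common-slot property to rule out spurious edges. This is the same argument, just phrased in terms of named slots and private sets rather than the paper's two-stage $U_{\cH}$-then-$U_{\sF}$ presentation.
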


Given the numbers $A(s,2k-2,k)$, the induced-universal graph $U_{\sF}$ for $\sF$ has an explicit construction. Furthermore, all embedded graphs of $\sF$ in $\sU_{\sF}$ are pairwise edge disjoint, as only vertices are shared into some common stable sets. It follows that $U_{\sF}$ is also an induced-universal graph for any edge-colored graph of $\sF$. For the family of all edge-colored cliques with $r$ colors, we refer to~\cite{Kouekam21}.

\medskip

\begin{proofof}\textbf{\cref{th:universal}.}
  Consider a family $\sF$ of $t$ graphs with $n$ vertices that are $p$-almost equitably $k$-colorable, where $t \le A(s,2k-2,k)$ for
  some integer $s$.

  We define the graph family $\cH$ obtained from $\sF$, by removing, for each $G\in\sF$, exactly $p$ vertices of $G$ such that the resulting graph has an equitable $k$-coloring. The fact we can remove exactly $p$ vertices is due to \cref{claim:equal}. By construction, $|\cH| \le |\sF| = t$.

  $U_{\cH}$ is composed of $s$ stable sets $S_1\dots,S_s$, each with $\ceil{(n-p)/k}$ vertices.

  We define $\sK_s$ be the complete graph on $s$ vertices that are the stables $S_1,\dots,S_s$. As we already said, it is possible to pack
  $A(s,2k-2,k)$ cliques of $k$ vertices in $\sK_s$ in a way that no two cliques of the packing share more than one vertex. Let
  $Q_1,\dots,Q_t$ be such clique packing in $\sK_s$. Note that this is possible because $t\le A(s,2k-2,k)$. E.g., if $k = 2$, $Q_i$'s are
  just $K_2$.

  The edges of $U_{\cH}$ are determined by the graphs of $\cH$ as
  follows. We associated with each graph $H_i \in \cH$ a unique clique
  $Q_i$ of the packing. In $U_{\cH}$, $Q_i$ corresponds to a
  collection of $k$ stables $C_1,\dots,C_k$, taken among the
  $S_i$'s. In the meanwhile, the equitable $k$-coloring of $H_i$
  induced a partition of $V(H_i)$ into $k$ stables, each with at most
  $\ceil{(n-p)/k}$ vertices. We map all vertices of a given part of
  $H_i$, say those of color $j$, to some stable $C_j$ in
  $U_{\cH}$. Then, we add to $U_{\cH}$ all edges of $H_i$, so that
  are between some $C_j$ and $C_{j'}$. Mapping the vertices of color
  $j$ in $H_i$ on $C_j$ is $U_{\cH}$ is possible because each $C_j$
  has $\ceil{(n-p)/k}$ vertices.

  From the above construction, one can check that each $H_i \in\cH$ is
  an induced subgraph of $U_{\cH}$, since any two graphs of $\cH$
  share at most one stable in $U_{\cH}$. It shows that $U_{\cH}$ is an
  induced-universal graph for $\cH$ with $s\ceil{(n-p)/k}$ vertices.

  The graph $U_{\sF}$ is obtained from $U_{\cH}$ by adding the
  following set of vertices and edges. For each $G_i \in\sF$, we select
  some $H_i = G_i\setminus X_i$ in $\cH$ with $|X_i| = p$. This is possible because $G$ is $p$-almost equitably $k$-colorable. The graph
  $H_i \in\cH$ appears in $U_{\cH}$ as induced subgraph. Then, we add
  $p$ vertices that are connected to $H_i$ in the same way $X_i$ is
  connected to its neighbors in $G_i$ as shown in \cref{fig:univ}. 
  Repeating the same process for each graph $G_i\in \sF$ adds $tp$ 
  vertices.

  Eventually, the graph $U_{\sF}$ has $s\ceil{(n-p)/k} + tp$ vertices
  and is induced-universal for the family $\sF$.  See \cref{fig:univ}
  for an illustration with $k=2$.

  \begin{myfigure}
    \tikzstyle{mypoint}=[inner sep=1.4pt, minimum size=7pt]
    \tikzset{green_node/.append style={mypoint}}
    \tikzset{red_node/.append style={mypoint}}
    \tikzset{blue_node/.append style={mypoint}}
    \tikzset{orange_node/.append style={mypoint}}
    \tikzset{pink_node/.append style={mypoint}}
    \tikzset{circle/.append style={mypoint}}
    \ctikzfig{small_universal_graphs_construction}
    \caption{Construction of $U_{\sF}$ for a family
      $\sF = \set{G_1,\dots,G_6}$ of $2$-almost $2$-colorable graphs.
      Here $p = k=2$, $s = 4$, and
      $t = A(s,2k-2,k) = A(s,2,2) = \binom{s}{2} = 6$. The stables
      $S_1,\dots,S_4$, each of size $\ceil{(n-p)/k}$, are shared
      between the $6$ graphs of $\sF$. }
    \label{fig:univ}
  \end{myfigure}
\end{proofof}

%\Cyril{Peut-être définir une tree-dec, path-dec et pathwidth.}

The above construction relies on the function $A(s,2k-2,k)$, whose values are not all known, but can be computed in practice (see for instance~\cite{Owen95}).  

\subsection{Bounded pathwidth graphs}

We make a link between pathwidth and almost equitable coloring. Recall that a graph $G$ has pathwidth at most $p$ if it is a subgraph of some interval graphs with maximum clique size $p+1$.

\begin{restatable}{theorem}{thpathwidth}\label{th:pathwidth}
  Every $k$-colorable graph of pathwidth at most $p$ is $p(k-1)$-almost equitably $k$-colorable.
\end{restatable}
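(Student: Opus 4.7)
The plan is to prove the statement by induction on the chromatic number $k$, exploiting the fact that a proper $k$-coloring of $G$ exists and that the path decomposition provides small separators useful for local surgery on this coloring.

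The base case $k = 1$ is immediate: a $1$-colorable graph has no edges, so its unique $1$-coloring is already equitable, and $X = \varnothing$ suffices with $|X| = 0 = p \cdot 0$.

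For the inductive step with $k \ge 2$, the plan is to peel off a single color class $S$ of the size dictated by an equitable $k$-coloring, after at most $p$ preliminary removals. Concretely, I would produce a set $Y \subseteq V(G)$ with $|Y| \le p$ and a stable set $S \subseteq V(G) \setminus Y$ such that $S$ is a full color class in some proper $k$-coloring of $G \setminus Y$ (so that $G \setminus (Y \cup S)$ is $(k-1)$-colorable) and such that $|S| \in \{\lfloor (n - |Y|)/k \rfloor, \lceil (n - |Y|)/k \rceil\}$. Since $G \setminus (Y \cup S)$ has pathwidth at most $p$ and chromatic number at most $k - 1$, applying the inductive hypothesis delivers a set $X'$ with $|X'| \le p(k-2)$ such that $G \setminus (Y \cup S \cup X')$ admits an equitable $(k-1)$-coloring. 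Adjoining $S$ as the $k$-th color class yields a $k$-coloring of $G \setminus (Y \cup X')$, and a short arithmetic check (using $|S| \approx (n - |Y|)/k$ and standard manipulations of $\lfloor \cdot \rfloor, \lceil \cdot \rceil$) shows that the resulting class sizes all lie in the equitable range. Setting $X = Y \cup X'$ then gives $|X| \le p + p(k-2) = p(k-1)$, closing the induction.

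The principal obstacle lies in producing the pair $(Y, S)$ with $|S|$ of exactly the required size. If some color class of an arbitrary proper $k$-coloring of $G$ already has the right size, one takes $Y = \varnothing$. Otherwise, I would turn to the path decomposition of width $p$ and exhibit a separator bag of size at most $p+1$ whose deletion splits $G$ into two parts on which Kempe-chain swaps can be performed independently once the colors of the separator are fixed. By taking all possible combinations of Kempe-chain swaps on the two sides, together with the option of placing one separator vertex into $Y$, the set of achievable class-size vectors should be rich enough to contain one with the desired $|S|$. The hard part is proving this density statement---essentially an integer intermediate-value argument for the sizes of color classes under coordinated Kempe swaps on both sides of the separator---and managing small corner cases (notably $k = 2$, where parity obstructions may consume part of the $p$ slack granted by $Y$).
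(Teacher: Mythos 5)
Your high-level plan — peel off one color class $S$ of prescribed size after removing a small set $Y$, then recurse on the $(k-1)$-colorable remainder — is essentially the unrolled form of the paper's iterative argument, which repeats a rebalancing step $k-1$ times, each time spending $p$ vertices. So the overall architecture matches. However, there are two genuine gaps.

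First, an arithmetic error in the target size. You propose to take $|S|\in\{\lfloor (n-|Y|)/k\rfloor,\lceil (n-|Y|)/k\rceil\}$, then recurse to find $X'$ with $|X'|\le p(k-2)$. But after adjoining $S$ back, the final graph $G\setminus(Y\cup X')$ has $n-|Y|-|X'|$ vertices, so an equitable class has size about $(n-|Y|-|X'|)/k$, which differs from $(n-|Y|)/k$ by up to $p(k-2)/k$. For $k\ge 3$ and $p\ge 1$ this exceeds one, so "standard manipulations of $\lfloor\cdot\rfloor,\lceil\cdot\rceil$" will not save the equitability check. You need $|S|$ to anticipate the recursion's deletions: the correct target is roughly $(n-p(k-1))/k$, and you should additionally force $|X'|$ to have size \emph{exactly} $p(k-2)$ (which is what the paper's \cref{claim:equal} is for) so that the final count is predictable. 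The paper bakes this in from the start by choosing every target $a_i\in\{\lfloor (n+p)/k\rfloor - p,\ \lceil (n+p)/k\rceil - p\}$, i.e.\ $\approx (n-p(k-1))/k$.

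Second, and more fundamentally, the core technical lemma — that at most $p$ deletions suffice to force a stable set of a prescribed size — is left as a sketch. Your idea (one separator bag of size $\le p+1$, then independent Kempe swaps on the two sides) is in the right spirit, but the crucial intermediate-value/"density" claim that you flag as hard is precisely where the proof lives, and a single separator bag does not obviously give enough combinations to hit every target. The paper resolves this with \cref{lem:rebalanced}: it restricts attention to the \emph{bipartite subgraph induced by just two color classes} (the least and most frequent), takes a \emph{nice} path decomposition, and for each bag $X_i$ defines the 2-coloring that swaps colors on all vertices strictly before $X_i$. Successive bags change by one vertex, so the class-size vector moves by one coordinate at a time, which directly yields a discrete intermediate-value property from $(|C_1|,|C_2|)$ to $(|C_2|,|C_1|)$; a target bag $X_j$ with $|X_j|\le p$ then serves as the deleted set. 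This is a coordinated family of Kempe swaps organized by the sweep, not a single-separator argument, and the organization is what makes the intermediate-value step provable. There is also a case analysis to rule out $|X_j|=p+1$ at the moment the target count is hit. Without this lemma (or an equivalent), your induction does not close.
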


Because graphs of pathwidth $p$ are $(p+1)$-colorable (as they are $p$-degenerated), we immediately get from \cref{th:pathwidth}:

\begin{corollary}\label{cor:3pw_sqrt}
  Every graph of pathwidth at most~$p$ is $p^2$-almost equitably $(p+1)$-colorable.
\end{corollary}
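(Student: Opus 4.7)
The plan is to derive the corollary as an immediate application of Theorem~\ref{th:pathwidth} by plugging in the right value of $k$. The only ingredient missing from the statement is the chromatic bound, so I first justify why graphs of pathwidth at most $p$ are $(p+1)$-colorable.

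Concretely, I would recall the standard fact that a graph of pathwidth at most $p$ is $p$-degenerate: given a path decomposition of width $p$, order the vertices by the first bag in which they appear (breaking ties arbitrarily inside a bag). Each vertex $v$ has its neighborhood contained in the bags where $v$ lies; in particular, the neighbors preceding $v$ in this ordering all sit in the first bag containing $v$ and are at most $p$ in number, since that bag has size at most $p+1$. A greedy coloring along this ordering then produces a proper $(p+1)$-coloring, so graphs of pathwidth at most $p$ are indeed $(p+1)$-colorable.

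With this in hand, I would apply Theorem~\ref{th:pathwidth} with $k := p+1$. The theorem guarantees that any $k$-colorable graph of pathwidth at most $p$ is $p(k-1)$-almost equitably $k$-colorable. Substituting $k = p+1$ yields $p(k-1) = p \cdot p = p^{2}$, so the graph is $p^{2}$-almost equitably $(p+1)$-colorable, which is exactly the statement of the corollary.

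There is no genuine obstacle: the content is entirely absorbed by Theorem~\ref{th:pathwidth}, and the remaining verification that pathwidth-$p$ graphs are $(p+1)$-colorable is a one-line degeneracy argument. The only mild pitfall is making sure that the $k$ used to invoke the theorem matches the chromatic upper bound, i.e., that one really picks $k = p+1$ and not the actual chromatic number $\chi(G)$, which could be smaller; choosing $k = p+1$ is harmless because any proper $\chi(G)$-coloring can be extended to a proper $(p+1)$-coloring by leaving color classes empty, so the hypothesis ``$k$-colorable'' of Theorem~\ref{th:pathwidth} is satisfied.
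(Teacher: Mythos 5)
Your proposal is correct and matches the paper's argument: the paper also observes that pathwidth-$p$ graphs are $p$-degenerate hence $(p+1)$-colorable, and then immediately applies Theorem~\ref{th:pathwidth} with $k=p+1$ to get $p(k-1)=p^2$. Your extra remarks (spelling out the degeneracy ordering and noting that one may use $k=p+1$ even when $\chi(G)$ is smaller) are sound but not needed beyond what the paper already states.
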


The strategy to prove \cref{th:pathwidth} is as follows. 

First, we select two colors that are non-equitable for the $k$-coloring, the least and the most frequent colors. Second, we show how to re-balance these two colors, by removing at most $p$ vertices, so that one of the color has $n/k - p(k-1)/k$ vertices. Then, we set this color aside and repeat this process $k-1$ times in total, until all the colors are balanced, so by removing a total of $p(k-1)$ vertices.

For the second part, we need of the following technical lemma.

\begin{lemma}\label{lem:rebalanced}
  Let $B$ be a graph having a path-decomposition of width $p$ and a $2$-coloring into stables $C_1, C_2$ with $|C_1| \ge |C_2|$. Then,
  for every non-negative integer $a \in [|C_2|-p, |C_1|]$ there exists a set $X$ of at most $p$ vertices and a $2$-coloring $C'_1,C'_2$ of $B\setminus X$ such that $|C'_1| = a$.
\end{lemma}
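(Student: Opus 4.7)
The plan is to fix a nice path-decomposition $(B_1,\dots,B_m)$ of $B$, of width $p$ (so $|B_i|\le p+1$ and consecutive bags differ by exactly one introduce or forget move). For each $i\in\{0,\dots,m\}$, writing $L_i=\bigcup_{j\le i}B_j$ and using the convention $B_{m+1}=\emptyset$, I partition
\[
  V(B) \;=\; A_i \;\sqcup\; M_i \;\sqcup\; R_i
\]
with $A_i = L_i\setminus B_{i+1}$, $M_i = L_i\cap B_{i+1}$, $R_i = V(B)\setminus L_i$. The standard bag-contiguity argument gives that no edge of $B$ runs between $A_i$ and $R_i$, and the nice structure forces $|M_i|\le p$ at every index (at an introduce step, $|M_i|=|B_i|\le p$; at a forget step, $|M_i|=|B_{i+1}|\le p$). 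These will be the only structural facts I need.

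For the upper range $a\in[|C_2|,|C_1|]$, I will delete $X:=M_i$ for a well-chosen $i$ and flip the 2-coloring on $A_i$, producing a new color-$1$ class of size
\[
  f(i) \;:=\; |C_2\cap A_i| \;+\; |C_1\cap R_i|.
\]
The flip is legitimate because, after removing $M_i$, every component of $B\setminus M_i$ lies wholly in $A_i$ or wholly in $R_i$, and swapping the two colors inside a bipartite component preserves bipartiteness. A direct computation gives $f(0)=|C_1|$ and $f(m)=|C_2|$, and the set identities $A_{i+1}\setminus A_i = B_{i+1}\setminus B_{i+2}$ and $R_i\setminus R_{i+1} = B_{i+1}\setminus L_i$ each have cardinality at most $1$ in a nice path-decomposition, so $f(i+1)-f(i)\in\{-1,0,+1\}$. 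The discrete intermediate-value principle for such $\pm 1$-walks then delivers an index $i$ with $f(i)=a$, as required.

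For the lower range $a\in[\max(0,|C_2|-p),\,|C_2|-1]$, no cut is needed at all: simply swap the two color classes of $B$ globally (the pair $(C_2,C_1)$ is still a valid 2-coloring of $B$) and delete any subset $X\subseteq C_2$ of size $|C_2|-a\le p$, setting $C'_1:=C_2\setminus X$ and $C'_2:=C_1$. This yields $|C'_1|=a$ with $|X|\le p$. Together with the first case, every non-negative integer in $[|C_2|-p,\,|C_1|]$ is realized.

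The main obstacle is the unit-step bound $|f(i+1)-f(i)|\le 1$; it rests on the two set identities above and on the nice-decomposition fact that an elementary move alters $B_{i+1}\setminus B_{i+2}$ and $B_{i+1}\setminus L_i$ by at most one vertex apiece. The remaining checks—namely $|M_i|\le p$, the separator property of $M_i$, and the validity of the swapped 2-coloring on each component—are routine consequences of the path-decomposition axioms.
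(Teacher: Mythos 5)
Your proof is correct and follows essentially the same strategy as the paper's: walk along a nice path-decomposition, delete a small separator at each index, flip the two colors on one side of the separator, and apply a discrete intermediate-value argument to the resulting color count. The one genuine improvement is your choice of separator: you delete $M_i = L_i\cap B_{i+1}$, which by contiguity of the bags containing each vertex equals the intersection $B_i\cap B_{i+1}$ of two consecutive bags and therefore always has at most $p$ vertices. The paper instead deletes an entire bag $X_j$, which can have up to $p+1$ vertices, and must argue separately (by taking the largest index $j$ with $a_j = a$ and deriving a contradiction from $|X_j|=p+1$) that a suitable bag of size at most $p$ exists; your variant neatly bypasses that step. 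One small point worth making explicit: the unit-step bound $|f(i+1)-f(i)|\le 1$ at the two ends $i=0$ and $i=m-1$ needs $|B_1|\le 1$ and $|B_m|\le 1$ (for instance $B_1=B_m=\emptyset$, as the nice decomposition the paper cites from~\cite{CFKLx15} guarantees). You state a convention only for $B_{m+1}$, so as written $|f(1)-f(0)|$ could be as large as $|B_1|$, and the intermediate-value argument would break at the left boundary; a one-line normalization fixes this.
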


\begin{proof}
  Let $B$ be a bipartite graph, and $P = (X_1, \ldots, X_r )$ be a path-decomposition of width $p$. According  to~\cite[Lemma~7.2]{CFKLx15}, we can assume that $P$ is \emph{nice}, meaning that $X_1,X_r$ are empty, and that $X_{i+1}$ is obtained from $X_i$ either by adding a vertex not in $X_i$, or by removing a vertex from $X_i$. Some $X_i$'s other than $X_1$ and $X_r$ may be empty, in particular if $B$ is not connected.
  
  For every $X_i$ of $P$, we define a $2$-coloring $C_1^i,C_2^i$ of $B\setminus X_i$ as follows. Informally, all the vertices of
  $B\setminus X_i$ \emph{after} $X_i$ in $P$ (i.e., every $u \in X_j\setminus X_i$ with $j > i$) keep their color. Moreover, the ones \emph{before} $X_i$ (i.e., every $u \in X_j\setminus X_i$ with $j < i$) exchange their colors. More precisely, we define $C_1^i$ as
  the set of vertices of $B\setminus X_i$ that are in $C_1$, if after $X_i$, and in $C_2$, if before $X_i$. Moreover, define $C_2^i = V(B)\setminus X_i \setminus C_1^i$, that is the set of vertices of $B\setminus X_i$ that are in $C_2$, if after $X_i$, and in $C_1$, if before $X_i$. Note that, for each $i$, $C_1^i,C_2^i$ is a $2$-coloring of $B\setminus X_i$ since, from the path-decomposition $P$, vertices before and after $X_i$ are in different connected components of $B\setminus X_i$. Since there no vertices before $X_1$ and $B\setminus X_1 = B$, we have $C_1^1 = C_1$ and $C_2^1 = C_2$. Similarly, there no vertices after $X_r$ and $B\setminus X_r = B$, so $C_1^r = C_2$ and $C_2^r = C_1$.

  For simplicity, denote by $a_i = |C_1^i|$ and $b_i = |C_2^i|$ the color's counters for $C_1^i,C_2^i$. The $2$-colorings defined for $i=1$ and $i=r$ have their colors inverted. So, $a_1 = |C_1^1| = |C_1| = b_r$ and $b_1 = |C_2^1| = |C_2| = a_r$. Note that $a_1 \ge a_r$, since $|C_1| \ge |C_2|$.

  Consider any non-negative integer $a \in [|C_2|-p, |C_1|]$. In other words, $a \in [\max(0,a_r-p), a_1]$. If $a \le a_r$, then we can select for $X$ any subset of $C_2$ with $|X| = a_r - a$ vertices. Since $a \ge \max(0,a_r-p)$, then $a_r - a \le a_r - \max(0,a_r - p) = \min(a_r - 0, a_r - (a_r-p)) \le p$, and thus we have $|X| \le p$, and $|C_2\setminus X| = |C_2| - |X| = a_r - (a_r - a) = a$. Thus, if $a \le a_r$, we are done with the initial 2-coloring of $B$ by setting $C'_1 = C_2 \setminus X$ and $C'_2 = C_1\setminus X = C_1$. So, from now on, let us assume $a \in (a_r,a_1]$.

  Two consecutive bags of $P$ change by exactly one vertex (recall that $P$ is nice). Therefore, whenever moving from $X_i$ to $X_{i+1}$ along $P$, the color's counters for the 2-coloring $C_1^i,C_2^i$ goes from $(a_i,b_i)$ to $(a_{i+1},b_{i+1})$ for $C_1^{i+1},C_2^{i+1}$ such that
  \[
    (a_{i+1},b_{i+1}) \in \set{ (a_i - 1,b_i), (a_i + 1, b_i), (a_i,
      b_i - 1), (a_i, b_i + 1) } ~.
  \]
  Since $a \in (a_r,a_1]$, there must exist at least one bag $X_j$ of $P$ such that $a_j = a$ since the color's counter moves decreasingly one-by-one on each component from $(a_1,b_1)$ to $(a_r,b_r)$ with $a_1 \ge a_r$. W.l.o.g. assume $j$ is the largest index such that $a_j = a$.

  If $|X_j| \le p$, then we are done by selecting $X = X_j$ and the 2-coloring $C'_1 = C_1^j$ and $C'_2 = C_2^j$. We will conclude now by showing that the case $|X_j| = p + 1$ is not possible.

  Assume $|X_j| = p+1$. Consider $X_{j+1}$, which exists in $P$, since $|X_j| > 0$ and thus $X_j \neq X_r = \emptyset$. Note that $|X_{j+1}| < |X_j|$, since $P$ is nice. We also remark that $a_i + b_i + |X_i| = \cardV{B}$ for every $i$. If $a_{j+1} = a$, then $j$ is not maximal: a contradiction. If $a_{j+1} < a$, then $b_{j+1} = b_j$. It follows that $a_{j+1} + b_{j+1} + |X_{j+1}| < a_j + b_j + |X_j|$: a contradiction, both sums must be $\cardV{B}$. So, we are left with the case $a_{j+1} > a$. Thus, we have $a_r < a < a_{j+1}$. By considering the bags of $P$ from $X_{j+1}$ to $X_r$, and the color counters from $(a_{j+1},b_{j+1})$ to $(a_r,b_r)$, there must be an index $j' \in [j+1,r]$ such that $a_{j'} = a$: a contradiction with the maximality of $j$.
\end{proof}

%We are now ready to proof \cref{th:pathwidth}.

\begin{proofof}\textbf{\cref{th:pathwidth}.}
  Let $G$ be a graph with $n$ vertices having a path-decomposition of width $p$ and a $k$-coloration into stable sets $C_1, \ldots, C_k$. The willing $k$-coloring for $G$ is obtained by applying the following process (initially the current graph is $G$ it-self):

  \begin{enumerate}

  \item Select in the current graph the two stable sets corresponding to the least and most frequent colors, say $C_{\min},C_{\max}$.

  \item Apply \cref{lem:rebalanced} on the bipartite subgraph induced by $C_{\min},C_{\max}$, and with a suitable non-negative integer $a \in [|C_{\min}|-p,|C_{\max}|]$. This provides a set $X$ and a new $2$-coloring $C'_1,C'_2$ with $|C'_1| = a$ and $|X|\le p$. Adjust $X$ and $C'_2$ such that $|X| = p$.

  \item Update the current graph by moving aside vertices of $X$ and
    $C'_1$, and repeat.

  \end{enumerate}
  
  It is clear that after repeating this process $k-1$ times, we obtain a new $k$-coloring for $G$, the last color being composed of the remaining graph. The first $k-1$ colors are all the $C'_1$'s stable sets constructed during the process. Moreover, the deleting set is the union of all the $X$'s sets constructed. It contains exactly $p(k-1)$ vertices.

  What we need to check is that such a $k$-coloring for $G$, without the deleting set, is equitable.

  To be more precise, we introduce the following notations. Consider the situation obtained after applying $i$ loops of the above process,
  and denote by:

  \begin{itemize}[noitemsep]

  \item $G_i$ the current graph, initially $G_0 = G$;

  \item $C_1^i,\dots,C_{k-i}^i$ the current $(k-i)$-coloring of $G_i$,
    initially $(C_1^0,\dots,C_{k}^0) = (C_1,\dots,C_k)$.

  \item $C'_1,\dots,C'_i$ the current sequence of $i$ colors that are supposed to be equitable, initially, for $i=0$, the sequence is empty, no such colors have yet been defined; and
 
  \item $X_i$ the current deleting set for $G$, obtained by taking the union of all $X$'s sets constructed so far, initially $X_0 = \emptyset$.
    
  \end{itemize}
  
  \def\state#1#2#3{\langle~{#1}~|~{#2}~|~{#3}~\rangle}

  After $i$ loops, the current state can be summarized as a partition
  of the vertex set of $G$, and can be represented by
  \[
    \state{C_1^i,\dots,C_{k-i}^i}{X_i}{C'_1,\dots,C'_i}
  \]
  where $G_i$ is the graph induced by $C_1^i\cup\cdots\cup
  C_{k-i}^i$. Initially, for $i=0$, we have the state
  \[
    \state{C_1,\dots,C_{k}}{\emptyset}{\eps}
  \]
  where $\eps$ denotes the empty sequence.

  So, the $(i+1)$-th step of the process can be rephrased as follows:

  \begin{enumerate}

  \item Select in $G_i$ two stables set $C^i_{\min},C^i_{\max}$, among its
    $k-i$ colors $C_1^i,\dots,C_{k-i}^i$, with least and most
    cardinality.

  \item Apply \cref{lem:rebalanced} on the induced subgraph
    $G[C^i_{\min} \cup C^i_{\max}]$ with a suitable parameter $a_i$. This
    provides a set $X$ and a stable set $C'_i$ with $|C'_i| =
    a_i$. Adjusting $X$ to $p$ vertices, we form $X_i$ with $i\cdot p$
    vertices.

  \item Update $G_i$ and its coloring $C_1^i,\dots,C_{k-i}^i$ into
    $G_{i+1}$ and $C_1^{i+1},\dots,C_{k-i-1}^{i+1}$.

  \end{enumerate}
  It can also be illustrated by the state transition:
  \[
    \state{C_1^i,\dots,C_{k-i}^i}{X_i}{C'_1,\dots,C'_i}
    \quad\xrightarrow{~i+1~}\quad
    \state{C_1^{i+1},\dots,C_{k-i-1}^{i+1}}{X_{i+1}}{C'_1,\dots,C'_i,C'_{i+1}}
  \]

  After $k-1$ steps, we have a deleting set $X_{k-1}$ with exactly
  $p(k-1)$ vertices, and a $k$-coloring of $G\setminus X_{k-1}$ into
  stable sets $C'_1,\dots,C'_{k-1}$ and $C_{1}^{k-1}$ (the remaining
  stable set, actually $G_{k-1}$). In other words, the last transition is:
  \[
    \state{C_1^{k-2},C_2^{k-2}}{X_{k-2}}{C'_1,\dots,C'_{k-2}}
    \quad\xrightarrow{~k-1~}\quad
    \state{C_1^{k-1}}{X_{k-1}}{C'_1,\dots,C'_{k-1}}
  \]

  To show that these colors are equitable for $G\setminus X_{k-1}$, we have to show that $a_i$'s can be chosen such that $a_i \in \set{\ceil{(n-|X_{k-1}|)/k},\floor{(n-|X_{k-1}|)/k}}$, and, similarly the for the last color $C_1^{k-1}$, that $C_1^{k-1} \in \set{\ceil{(n-|X_{k-1}|)/k},\floor{(n-|X_{k-1}|)/k}}$. Furthermore, in order to apply \cref{lem:rebalanced}, we have to check that
  $a_i \in [|C^i_{\min}|-p,|C^i_{\max}|]$ and $a_i \ge 0$.

  We remark that for non-null integers $m,k$, we have $m = k\floor{m/k} + r$, where $r = m \bmod k$. Observing that
  $\ceil{m/k}-\floor{m/k} \in \set{0,1}$ depending whether $k$ divides
  $m$ or not, this can be rewritten as $m = (k-r)\floor{m/k} + r \ceil{m/k}$.

  So, to be equitable, we must have in $G\setminus X_{k-1}$ precisely
  $k-r$ stable sets of size $\floor{(n-|X_{k-1}|)/k}$ and $r$ stable sets of
  size $\ceil{(n-|X_{k-1}|)/k}$, where $r = (n-|X_{k-1}|) \bmod
  k$. Since $n-|X_{k-1}| = n - p(k-1)$, we have $r = (n+p) \bmod k$,
  and $(n-|X_{k-1}|)/k = (n+p)/k - p$. So, we must have $r$ stable sets of
  size $\ceil{(n+p)/k}-p$ and $k-r$ stable sets of size
  $\floor{(n+p)/k}-p$, including the last color $C_1^{k-1}$.

  For the $r$ first steps $i = 1,\dots,r$, we choose
  $a_i = \ceil{(n+p)/k} - p$, and for $i>r$ we choose
  $a_i = \floor{(n+p)/k} - p$. As $a_i$'s differ by at most one, it
  will be more convenient to rewrite $a_i$ as
  \[
    \forall i \in\range{1}{k-1}, \quad a_i ~=~ \floor{\frac{n+p}{k}} - p +
    d_i \quad\mbox{, where~} d_i = \left\{%
      \begin{array}{ll}
        1 & \mbox{if $r>1$ and $i\le r$}\\
        0 & \mbox{if $r=0$ or $i>r$}
      \end{array}\right.%
  \]
  It is easy to check that $\sum_{j=1}^i d_i = \min(i,r)$, since up to
  index $r$, $d_i = 1$, and then $d_i = 0$ (we have always $d_i= 0$ if
  $r = 0$). W.l.o.g. we assume that $n \ge p(k-1)$, since
  \cref{th:pathwidth} is clearly true otherwise. It follows that
  $(n+p)/k - p \ge 0$, and thus all the $a_i$'s are $\ge 0$.

  We assume that \cref{lem:rebalanced} applies correctly for the first
  $i$ steps of the process, that is
  $a_i \in[|C^i_{\min}|-p,|C^i_{\max}|]$. Let $n_i = \cardV{G_i}$ be the
  number of vertices in the current graph $G_i$. By construction,
  $n_i = n - |X_i| - \sum_{j=1}^i |C'_i|$. Since \cref{lem:rebalanced}
  applies, $|C'_i| = a_i$, and we have
  \begin{eqnarray*}
    n_i &=& n - |X_i| - \sum_{j=1}^i a_i\\
        &=& n - ip - \sum_{j=1}^i \pare{ \floor{\frac{n+p}{k}} - p + d_j }\\
        &=& n - i \floor{\frac{n+p}{k}} - \sum_{j=1}^i d_j\\
        &=& n - i \floor{\frac{n+p}{k}} - \min(i,r) ~.
  \end{eqnarray*}
  %\Amaury{$+d_j$ plutôt que $d_i$ il me semble}

  We consider now Step $i+1$, for some $i \in \range{0}{k-2}$. There
  are $k-i$ colors in $G_i$, which has $n_i$ vertices, so the least
  and most frequent colors in $G_i$ satisfy
  $|C^i_{\min}| \le n_i/(k-i) \le |C^i_{\max}|$. To show that
  $a_{i+1} \in [|C^i_{\min}|-p, |C^i_{\max}|]$ (we have seen that
  $a_{i+1}$ -- in fact all the $a_i$'s -- is non-negative), it suffices to show that
  \begin{eqnarray}
    && \frac{n_i}{k-i} -p ~\le~ a_{i+1} ~\le~ \frac{n_i}{k-i} \nonumber\\
    &\Leftrightarrow& \frac{n_i}{k-i} - a_{i+1} \in [0,p] \nonumber\\
    &\Leftrightarrow& n_i - (k-i) \cdot a_{i+1} \in I, \mbox{~where~}I
                      = [0,(k-i)p] \nonumber\\
    &\Leftrightarrow& \pare{ n-i \floor{\frac{n+p}{k}} - \min(i,r) } -
                      (k-i) \pare{ \floor{\frac{n+p}{k}} - p +d_{i+1}
                      } \in I \nonumber\\
    &\Leftrightarrow& n - k \floor{\frac{n+p}{k}} - \min(i,r) + (k-i)
                      \pare{ p  - d_{i+1} } \in I \nonumber\\
    &\Leftrightarrow& r-p - \min(i,r) + (k-i)
                      \pare{ p  - d_{i+1} } \in I \label{eq:r-p}
  \end{eqnarray}
  %\amaury{Pas sur de comprendre pourquoi c'est $\frac{n_i}{k-i} -p ~\le~ a_{i+1}$ et pas $\floor{\frac{n_i}{k-i}}-p ~\le~ a_{i+1}$ c'est parce que c'est plus fort ?} \cyril{C'est que les $\floor{.}$ et $\ceil{.}$ n'apportent rie, sinon de la complexité. Donc mieux vaut s'en débarrasser dès que possibles. Et cela marche bien sans. Donc je prends.}
  the last simplification in Eq.~\eqref{eq:r-p} comes from the fact
  that we have chosen $r$ such that $n+p = k\floor{(n+p)/k} + r$.

  To prove Eq.~\eqref{eq:r-p}, we consider two cases.

  \paragraph{Case $d_{i+1} = 1$.} In that case $r>0$ and
  $i+1\le r$. So $\min(i,r) = i$. Eq.~\eqref{eq:r-p} can be simplified as:
  \begin{eqnarray*}
    && r - p - i + (k-i) (p - 1) \in I \\
    &\Leftrightarrow& (k-i-1)p - (k-r) \in I = [0,(k-i)p]\\
    &\Leftrightarrow& (k-i-1)p \ge (k-r) \mbox{~~and~~} (k-i-1)p - (k-r)
                      \le (k-i)p\\
    &\Leftarrow& (k-i-1)p \ge (k-i-1) \mbox{~~and~~} - (k-r)
                 \le p
  \end{eqnarray*}
  which is always true as $p \ge 0$ and $r < k$. Thus, Eq.~\eqref{eq:r-p} is true in this case.

  \paragraph{Case $d_{i+1} = 0$.} In this case, $r = 0$ or
  $i+1 > r$. We observe that in both cases, $r - \min(i,r) = 0$.  So,
  Eq.~\eqref{eq:r-p} can be simplified in
  \begin{eqnarray*}
    &&r - \min(i,r) -p + (k-i) (p - 0) \in I \\
    &\Leftrightarrow& (k-i-1)p \in [0,(k-i)p]
  \end{eqnarray*}
  which is true since $p\ge 0$ and $i+1\le k-1$. Thus, Eq.~\eqref{eq:r-p} is true in this case as well.

  All together, we have shown that the first $r$ colors satisfy
  $|C'_1|,\dots,|C'_r| = \ceil{(n+p)/k} - p$, and that the $k-r-1$
  next colors satisfy
  $|C'_{r+1}|,\dots,|C'_{k-1}| = \floor{(n+p)/k} - p$. It is easy to
  check that the last color $C^{k-1}_1$ has $\floor{(n+p)/k}-p$
  vertices. Indeed, by the partition of the vertices of $G\setminus
  X_{k-1}$, we must have
  \begin{eqnarray*}
    && n - |X_{k-1}| ~=~ \sum_{i=1}^r |C'_i| + \sum_{i=r+1}^{k-1} |C'_i| ~+~
       |C^{k-1}_1| \\
    &\Leftrightarrow& n - p(k-1) ~=~ r\pare{\ceil{(n+p)/k} - p} + (k-r-1)
                      \pare{ \floor{(n+p)/k} - p} + |C^{k-1}_1| \\
    &\Leftrightarrow& n ~=~ r\ceil{(n+p)/k} + (k-r-1)
                      \floor{(n+p)/k} + |C^{k-1}_1|\\
    &\Leftrightarrow& n+p ~=~ r\ceil{(n+p)/k} + (k-r-1)
                      \floor{(n+p)/k} + (|C^{k-1}_1|+p)
  \end{eqnarray*}
  This implies that $|C^{k-1}_1| + p = \floor{(n+p)/k}$ by the choice
  of $r$. So, $|C^{k-1}_1| = \floor{(n+p)/k}- p$ as required.

  This completes the proof of \cref{th:pathwidth}.
\end{proofof}

%\cyril{Pas clair que \cref{th:pathwidth} soit vraiment linéaire, en particulier si $p$ et $k$ ne sont pas constants. Je pense que cela ne sert à rien d'en faire plus.} \amaury{Ok, mais je comprends pas pourquoi $p$ et $k$ ne seraient pas constants, c'est nos paramètres.} \cyril{Bah, sais-tu faire un $p$-almost 2-coloring des arbres à $n$ sommets en temps $O(n)$ par exemple? Moi je dis que c'est pas clair, car ici $p = \Theta(\log{n})$.}

%%%%%%%%%%%%%%%%%%%%%%%%%%%%%%%%%%%%%%%%%%%%%%%%%%%%%%%%%%%%%%%%%%%%%%%%%%%%%

We left open the question about the optimality of \cref{th:pathwidth}. However, we can show that, for each $k$, the pathwidth in \cref{th:pathwidth} is required. More precisely:

\begin{proposition}
  For each integers $k$ and $p$, there is a $k$-colorable graph of pathwidth at most $p$ for which every $q$-almost equitably $k$-coloration requires $q \ge p$. 
\end{proposition}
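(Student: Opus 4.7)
The plan is to exhibit the complete bipartite graph $G := K_{p,\,kp+k-1}$, with parts $A$ of size $p$ and $B$ of size $N := kp+k-1$, and verify the three required properties (we may assume $k \ge 2$, since $k = 1$ forces $p = 0$ and is trivial). Two of these are immediate: $G$ is bipartite, hence $k$-colorable for every $k \ge 2$; and $G$ has pathwidth at most $p$, as witnessed by the path-decomposition whose $i$-th bag is $A \cup \{b_i\}$ for an enumeration $b_1,\dots,b_N$ of $B$.

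The heart of the proof is to show that no deletion set $X$ with $|X| \le p-1$ leaves $G \setminus X$ equitably $k$-colorable. Let $q_1 := |X \cap A|$ and $q_2 := |X \cap B|$, and set $p' := p - q_1$ and $N' := N - q_2$. Since $q_1 + q_2 \le p - 1$, both $p' \ge 1$ and $N' \ge 1$, so $G \setminus X = K_{p',N'}$, and
\[
  n' := p' + N' = (k+1)p + k - 1 - (q_1 + q_2) \ge kp + k = k(p+1),
\]
so $L := \lfloor n'/k \rfloor \ge p+1$. The plan is then to use the rigidity of proper $k$-colorings of the complete bipartite graph $K_{p',N'}$: since the two sides are completely joined, they must use disjoint color sets, say $j \ge 1$ colors on the $A$-side and $k - j \ge 1$ on the $B$-side. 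If the coloring is equitable, every color class has size in $\{L, L+1\}$, so the $j$ classes on the $A$-side sum to at least $jL \ge L \ge p+1$; but this sum is exactly $p' \le p$, a contradiction.

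The only case requiring a second look is when $X$ empties out the small side $A$ altogether (i.e., $q_1 = p$, $p' = 0$), since $G \setminus X$ then becomes an independent set and is trivially equitably $k$-colorable. However, this already costs $q \ge p$, matching the claimed lower bound and in fact showing that the example is tight (one could also achieve $q = p$ by keeping $A$ intact and deleting $p$ well-chosen vertices from $B$). I do not expect any significant obstacle here: the entire argument reduces to a short counting step combined with the elementary observation that in $K_{p', N'}$ the two sides cannot share a color.
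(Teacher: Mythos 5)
Your proof is correct, and it takes a genuinely different route from the paper's. The paper constructs a complete $k$-partite graph with $k-1$ small parts (summing to $p$) and one large part of size $2p+2k-1$; its argument tracks which small part survives the deletion, bounds the resulting equitable class size by $t+1$, and then shows the large part cannot be covered by $k-1$ such classes. You instead take the single complete \emph{bipartite} graph $K_{p,\,kp+k-1}$ and observe that after deleting at most $p-1$ vertices the surviving graph is still a nontrivial complete bipartite graph on $n' \ge k(p+1)$ vertices, so every equitable class has size at least $p+1$; since some class must sit entirely inside the (at most $p$-vertex) small side, that side is already too small. Both proofs rest on the same rigidity of proper colorings of complete multipartite graphs (color classes respect the parts), but your construction and the counting step are noticeably simpler: one side, one inequality. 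The paper's $k$-partite example is slightly more economical in vertex count ($3p+2k-1$ versus your $(k+1)p+k-1$), which costs nothing here since only existence is needed. One small presentational point: rather than flagging the case $q_1 = p$ as needing "a second look," you could simply note that it cannot arise under the hypothesis $|X| \le p-1$ (since $q_1 \le |X|$), which is what your inequalities $p' \ge 1$, $N' \ge 1$ already encode; the tightness remark is a nice aside but not part of the proof obligation.
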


\begin{proof}
Let $G$ be the complete $k$-partite graph with $k-1$ "small" parts of roughly $p/(k-1)$ vertices and the with one "large" part with $L = 2p + 2k-1$ vertices. To be more precise, each small part has $\floor{p/(k-1)}$ or $\ceil{p/(k-1)}$ vertices such that their size sum up to exactly $p$ vertices. The graph $G$ is $k$-colorable (each part is monochromatic, with a distinct color), and of pathwidth at most $p$ (consider a path of $L$ bags $B_1 - B_2 - \cdots - B_L$ where each $B_i$ is composed of the $p$ vertices contained in the small parts plus the $i$th vertex of the large part).

Consider any $q$-almost equitably $k$-coloration of $G$, let $X$ be the set of vertices that makes this coloring equitable, and let $G' = G\setminus X$. Suppose $|X| < p$.

There must be one of the small parts with $t>0$ vertices remaining in $G'$, because their sizes sum up to $p$. Note that $t\le \ceil{p/(k-1)} < p/(k-1) + 1$. It follows that $t+1 < p/(k-1) + 2$, and thus $(t+1)(k-1) \le p + 2k-1$.

In the large part, it remains $\ell > (t+1)(k-1)$ vertices in $G'$, because $L - |X| = 2p + 2k-1 - |X| > p + 2k-1 \ge (t+1)(k-1)$. The colors of any small part cannot appear in the large part. So, to color the vertices of the large part, at most $k-1$ colors are available. To be equitable, each color classes must have $t$ or $t+1$ vertices: a contradiction with $\ell > (t+1)(k-1)$.
\end{proof}

We note that the proofs of that \cref{th:pathwidth} and
\cref{lem:rebalanced} are constructive. Assuming a with-$p$
path-decomposition and a $k$-coloring of the graph are given, the
proofs lead to polynomial time algorithm for constructing the deleting set and the $k$-coloring making the graph $p(k-1)$-almost equitably
$k$-colored.

\subsection{Conflicting family}

An application of the construction in \cref{th:universal} is that it can give a lower bound on the size of a conflicting family w.r.t. some $cn$-lower bound.

\begin{restatable}{theorem}{thconflictsize}\label{th:conflict_size}
  Let $\sF$ be a family of $k$-colorable graphs with $n$ vertices and pathwidth $o(n/k)$. If $\sU(\sF) \ge cn - o(n)$ for some constant $c$, then $\sF$ contains more than $A(\ceil{ck}-1,2k-2,k)$ graphs, for every large enough $n$.
\end{restatable}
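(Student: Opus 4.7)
The plan is to argue by contraposition: assume $|\sF_n| \le t$ where $t := A(\ceil{ck}-1, 2k-2, k)$, and derive $\sU(\sF_n) < cn - o(n)$, contradicting the hypothesis.

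First, I would invoke \cref{th:pathwidth}: since each $G \in \sF_n$ is $k$-colorable with pathwidth $\pi = o(n/k)$, it is $p$-almost equitably $k$-colorable with $p := \pi(k-1) = o(n)$, as $k$ is a constant. Then I apply \cref{th:universal} with $s := \ceil{ck}-1$; the hypothesis $t \le A(s,2k-2,k)$ holds with equality by the very choice of $t$, yielding
$$\sU(\sF_n) ~\le~ s\ceil*{(n-p)/k} + tp ~\le~ \frac{s}{k}\,n + tp + s ~=~ \frac{s}{k}\,n + o(n),$$
since $s,k,t$ are constants (depending only on $c$ and $k$) and $p = o(n)$.

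The key arithmetic observation is that $s/k = (\ceil{ck}-1)/k < c$, because $\ceil{x}-1 < x$ for every real $x$. Setting $\delta := c - s/k > 0$, a constant depending only on $c$ and $k$, we conclude
$$\sU(\sF_n) ~\le~ cn - \delta n + o(n),$$
which is strictly less than $cn - f(n)$ for every fixed $f = o(n)$ and all sufficiently large $n$. This contradicts the standing assumption $\sU(\sF_n) \ge cn - o(n)$, so in fact $|\sF_n| > t$ for every large enough $n$.

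The only subtlety is bookkeeping the $o(n)$ error terms carefully through the bound of \cref{th:universal} and confirming that the constant gap $\delta > 0$ is genuinely constant. Both reduce to the elementary fact $\ceil{x}-1 < x$ together with $k$ being fixed. There is no substantive obstacle once Theorems~\ref{th:universal} and~\ref{th:pathwidth} are available, as they already encapsulate all the combinatorial content (block designs plus pathwidth-based equitable colorings).
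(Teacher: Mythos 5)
Your proof is correct and follows essentially the same route as the paper's: apply \cref{th:pathwidth} to obtain $p(k-1)$-almost equitable $k$-colorability, apply \cref{th:universal} with $s=\ceil{ck}-1$, and observe that $s/k<c$ so the resulting upper bound on $\sU(\sF)$ contradicts the assumed lower bound once the $o(n)$ terms are absorbed. If anything, your explicit identification of the constant gap $\delta=c-s/k>0$ is cleaner than the paper's phrasing (which states ``$\sU(\sF)/n<c+o(1)$'' where it really means $\sU(\sF)/n\le s/k+o(1)$, a quantity strictly below $c$ by a fixed margin); both arguments quietly use that $k$ is a fixed constant so that $t$, $s$, and $\delta$ do not depend on $n$.
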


\begin{proof}
    %\cyril{Calcul à finir. Peut-être qu'avec $s = \floor{ck}$, cela marche effectivement. Il se trouve que lorsque $ck$ n'est pas un entier, c'est pareil. Mais $\floor{ck}$ est en théorie meilleur car $\ceil{x}\le \floor{x}+1$ avec égalité lorsque ce n'est pas un entier.}
    Let $s$ be the largest integer such that $s < ck$ and let $a = A(s,2k-2,k)$. We have $s = \ceil{ck}-1$. We want to show that $\sF$ has to contain $t > a$ graphs. By way of contradiction, assume that $t \le a$.
    
    Note that $a \le \Bdiv{s}{k} \le (s/k)^2 < c^2$ by the choice of $s$. By \cref{th:pathwidth}, every graph of $\sF$ is $p(k-1)$-almost $k$-colorable, where $p$ is an upper bound on the pathwidth of the graph. Therefore, by \cref{th:universal}, $\sU(\sF) \le s n/k + tp(k-1)$. As $t\le a < c^2$, the additive term $tp(k-1) < c^2 pk = o(n)$ as $c$ is constant and $p = o(n/k)$ by assumption. Thus, $\sU(\sF) \le s n/k + o(n)$.
    
    This is incompatible with the assumption that $\sU(\sF) \ge cn - o(n)$, since, for every large enough $n$, this implies that $\sU(\sF)/n \ge c - o(1)$, whereas we have $\sU(\sF) \le s n/k + o(n)$ implies $\sU(\sF)/n < c + o(1)$ by the choice of $s$.
    
    It follows that $t > a = A(s,2k-2,k)$ where $s = \ceil{ck}-1$ as claimed.
\end{proof}

To illustrate \cref{th:conflict_size}, consider any family $\sF_n$ of bipartite graphs with sublinear pathwidth. From \cref{th:conflict_size}, if $\sU(\sF_n) \ge 2.1n -o(n)$, then $\sF_n$ must contains more than $A(\ceil{2.1\cdot 2} - 1, 2\cdot 2 - 2, 2) = A(4,2,2) = 6$ graphs. In particular, any conflicting family w.r.t. a $2.1n$-lower bound for trees must contains at least $7$ trees.

Now, we can combine the lower bounds on $c$ collected from \cref{app:tab:lb_subgraphs}, with \cref{th:conflict_size}. By this way, we obtain in \cref{app:tab:conflicting_size} the lower bounds on the size of conflicting families. Note that all families considered therein have bounded chromatic number and sublinear pathwidth.

%\cyril{$K_5$-minor free graphs have chromatic number at most four, because due to Wagner's structural characterization~\cite{Wagner37}, the Hadwiger's Conjecture holds and is equivalent to the famous Four Colour Theorem, cf. \cite{MPT23}}

\begin{table}[htbp!]
\begin{center}
\renewcommand{\arraystretch}{1.05}
\begin{tabular}{cc||c|c||clc}
&$\sF$ & $c$& $t$ & $k$ & $A(\ceil{ck}-1,2k-2,k)$&\\
\hline
&forests & $\phantom{0}1.626$ & $4$ & $2$ &$A(3,2,2) = 3$&\\ 
&outer-planar & $\phantom{0}3.275$& $13$ & $3$& $A(9,4,3) = 12$&\\
&series-parallel & $\phantom{0}3.850$ & $17$ & $3$& $A(11,4,3) = 16$&\\ 
&$K^-_5$-minor-free\myfootnotemark{foot}{1} & $\phantom{0}6.264$ & $51$ & $4$ & $A(25,6,4) = 50$&\\
&planar & $10.520$ & $137$ & $4$ & $A(42,6,4) = 136$&\\
&$K_{3,3}$-minor-free & $10.521$ & $124$ & $5$ & $A(52,8,5) \in\set{123,124}$\myfootnotemark{foot}{2}&\\
\hline
\end{tabular}
\caption{Lower bounds on the size $t$ of a conflicting families w.r.t. a $cn$-lower bound.}
\label{app:tab:conflicting_size}
\end{center}
\end{table}
%\myfootnotetext{foot}{1}{$K^-_5$ is the graph $K_5$ minus one edge.} 
%\myfootnotetext{foot}{2}{According to \href{https://aeb.win.tue.nl/codes/Andw.html\#d8}{Andries E. Brouwer's web pages}, it is not known whether $A(52,8,5) = 123$ or $124$.}

%%%%%%%%%%%%%%%%%%%%%%%%%%%%%%%%%%%%%%%%%%%%%%%%%%%%%%%%%%%%%%%%%%%%%%%%%%%%%%%%%%%

\section{Towards Super-Linear Lower Bounds}
\label{sec:towards}

The motivation of this part is mostly linked to the research of lower bounds for families of $n$-vertex graphs for which one suspect a super-linear lower bound, say $\lambda(n) \cdot n$ where $\lambda(n)$ is non-constant. We show that, for many of those families, namely those with small chromatic number and sublinear pathwidth, the size of any conflicting family w.r.t. a $\lambda(n) \cdot n$-lower bound cannot be linear in $\lambda(n)$. As shown in the next theorem, it must be at least quadratic.

\begin{restatable}{theorem}{thasuniversal}\label{th:asuniversal}
  Let $\sF$ be a family of $n$-vertex $k$-colorable graphs of pathwidth at most $p$ such that $p (k^2-1) \le n$. If the number of graphs in $\sF$ is $t\ge \max\set*{4k^2,811}$, then
  \[
    \sU(\sF) ~<~ \frac{15}{7} \cdot \sqrt{t} \cdot n ~.
  \]
\end{restatable}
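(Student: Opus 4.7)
My plan is to apply Theorem~\ref{th:universal} after converting each graph in $\sF$ to a $p(k-1)$-almost equitably $k$-colorable form via Theorem~\ref{th:pathwidth}. Setting $q := p(k-1)$ and picking any integer $s$ with $t \le A(s,2k-2,k)$, Theorem~\ref{th:universal} gives
\[
\sU(\sF) ~\le~ s\bigl\lceil (n-q)/k \bigr\rceil + tq ~\le~ \frac{sn}{k} + tq + s.
\]

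The first main step is to choose $s$ as small as possible. The Fisher-type counting bound $A(s,2k-2,k) \le s(s-1)/(k(k-1))$ from Section~\ref{sec:block_design} forces $s/k \ge \sqrt{t(k-1)/k}$, so $s/k$ cannot be much below $\sqrt{t}$. Conversely, in the range $t \ge \max\{4k^2,811\}$, explicit near-optimal packings (Steiner-like systems, or Wilson-type block design constructions) guarantee an integer $s$ with $A(s,2k-2,k) \ge t$ and $s/k$ only a controlled amount larger than $\sqrt{t}$; this controlled slack, together with the rounding terms, is where the specific constant $15/7$ in the conclusion comes from.

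The second main step controls the additive term $tq$. Using the hypothesis $p(k^2-1)\le n$, we have $q = p(k-1) \le n/(k+1)$, hence $tq \le tn/(k+1)$. The other hypothesis $t \ge 4k^2$ implies $\sqrt{t} \ge 2k$, which controls the ratio $\sqrt{t}/(k+1)$. Putting the pieces together,
\[
\sU(\sF) ~\le~ \frac{sn}{k} + \frac{tn}{k+1} + s,
\]
and a careful arithmetic comparison—taking $s/k \le \tfrac{15}{7}\sqrt{t}$ minus an explicit correction absorbing the additive terms—yields $\sU(\sF) < \tfrac{15}{7}\sqrt{t}\,n$.

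The main obstacle is the combinatorial design estimate: one needs a sharp enough lower bound on $A(s,2k-2,k)$, not merely the asymptotic $(1-o(1))\binom{s}{2}/\binom{k}{2}$ from the Rödl nibble, but a concrete inequality that is uniform in $k$ over all $t$ satisfying the hypothesis. The threshold $t \ge 811$ plausibly marks the smallest value from which the required near-optimal packings become available uniformly in $k$, while $t \ge 4k^2$ ensures the block count is comfortably above $k^2$ so that the design can be chosen compatibly with the block-size $k$. Verifying these packing bounds (likely case-splitting on small $k$ and invoking explicit design tables) is the technical heart of the argument.
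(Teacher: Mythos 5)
Your plan contains a genuine gap in the second main step, and it is fatal to the approach.

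You propose to apply \cref{th:pathwidth} to make each graph $q$-almost equitably $k$-colorable with $q = p(k-1)$, and then plug this directly into \cref{th:universal}, obtaining a bound of the form $\sU(\sF) \le sn/k + tq + s$ with $s/k$ roughly $\sqrt{t}$. You then bound $tq \le tn/(k+1)$ using $p(k^2-1)\le n$, and assert that $t \ge 4k^2$ ``controls the ratio $\sqrt{t}/(k+1)$.'' But that hypothesis gives $\sqrt{t}\ge 2k$, a \emph{lower} bound on $\sqrt{t}$, whereas you would need an \emph{upper} bound $\sqrt{t} \le O(k)$ to make $tq \le tn/(k+1) = O(\sqrt{t}\,n)$. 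In fact $tn/(k+1) \le \sqrt{t}\,n$ is equivalent to $\sqrt{t} \le k+1$, i.e.\ $t \le (k+1)^2$, while the hypotheses force $t \ge \max\{4k^2,811\} > (k+1)^2$ for every $k\ge 2$. So the additive term $tq$ already dominates $\sqrt{t}\,n$ at the boundary $t = 4k^2$, and grows out of control as $t$ increases (there is no upper bound on $t$). No choice of $s$ or rebalancing of constants can rescue this: the $tp$-term in \cref{th:universal} scales linearly in $t$, while the target bound scales like $\sqrt{t}$, so you must drive $p$ to zero, not merely to $p(k-1)$.

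This is exactly where the paper's proof diverges from yours. After using \cref{th:pathwidth} and \cref{claim:equal} to extract a set $X$ of size $q = p(k-1)$ with $G\setminus X$ equitably $k$-colorable, the paper pads each graph: it forms $G'$ from $G$ by adding $n-2q$ isolated vertices, for a total of $n' = 2(n-q)$ vertices, and shows $G'$ is \emph{genuinely} equitably $2k$-colorable (splitting $V(G')$ into $S = V(G)\setminus X$ and $T = X\cup\{\text{new isolated vertices}\}$, each equitably $k$-colored with matching color-class sizes, the inequality $q \le \floor{(n-q)/k}$ being exactly where $p(k^2-1)\le n$ is used). The key payoff is that $G'$ is $0$-almost equitably $2k$-colorable, so when \cref{th:asuniversal2} is applied to $\sF' = \{G'\}$ the additive term $tp$ vanishes, and the final bound $\sU(\sF)\le\sU(\sF')<\tfrac{15}{14}\sqrt{t}\,(n'+2k)=\tfrac{15}{7}\sqrt{t}\,(n-q+k)\le\tfrac{15}{7}\sqrt{t}\,n$ follows (using $k\le q$, which holds since $p,k\ge 2$). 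Your first step — the design-theoretic lower bound on $A(s,2k-2,k)$ uniform in $k$ — is in the right spirit (the paper uses a prime in $(14i,15i)$ via \cref{lem:A_sk} and \cref{lem:A_n}, which is where the $15/14$ and the threshold $811$ originate), but without the padding trick to eliminate the $tp$-term, the argument cannot close.
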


A consequence of \cref{th:asuniversal} is that any $\Omega(n\log{n})$-lower bound proof for planar graphs (if it exists), and more generally for fixed minor-closed family of graphs, must use a conflicting family of at least $t = \Omega(\log^2{n})$ witnesses $n$-vertex graphs.

First, this is because graphs excluding a fixed minor have pathwidth $O(\!\sqrt{n}\,)$ \cite{AST90}, and constant chromatic number, due to bounded edge density \cite{Mader67}, so conditions are fulfilled to apply \cref{th:asuniversal}. Second, for such family $\sF$ with $t = o(\log^2{n})$ graphs, \cref{th:asuniversal} would imply that $\sU(\sF) = o(n\log{n})$.

In order to prove \cref{th:asuniversal} we need, as intermediate step, the following result:

\begin{restatable}{theorem}{thasuniversaltwo}\label{th:asuniversal2}
  Let $\sF$ be a family of $n$-vertex $p$-almost equitably $k$-colorable graphs. If the number of graphs in $\sF$ is $t\ge \max\set*{k^2,811}$, then
  \[
    \sU(\sF) ~<~ \frac{15}{14} \cdot \sqrt{t} \cdot \pare{n-p+k} + t p ~.
  \]
\end{restatable}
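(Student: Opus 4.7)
The plan is to invoke \cref{th:universal} with a carefully chosen value of $s$ and then to bound $s$ in terms of $\sqrt{t}\cdot k$ using known lower bounds on the coding-theoretic function $A(s,2k-2,k)$ (equivalently, on the maximum size of a partial Steiner system $S(2,k,s)$).

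First, let $s$ be the smallest integer with $A(s,2k-2,k) \ge t$. By \cref{th:universal} applied to the family $\sF$ of $p$-almost equitably $k$-colorable $n$-vertex graphs,
\[
\sU(\sF) ~\le~ s\left\lceil\frac{n-p}{k}\right\rceil + tp ~\le~ \frac{s}{k}\,(n-p+k-1) + tp ~.
\]
It is therefore enough to prove that $s/k \le (15/14)\sqrt{t}$ whenever $t\ge\max\{k^2,811\}$: plugging this back in yields $(s/k)(n-p+k-1) < (15/14)\sqrt{t}\,(n-p+k)$, hence the claimed inequality.

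Second, I would separate the combinatorial design step into two regimes. For $k=2$ the identity $A(n,2,2)=\binom{n}{2}$ is exact, giving $s \le \sqrt{2t}+O(1) \le (15/14)\sqrt{t}\cdot 2$ as soon as $t$ is a modest constant. For $k\ge 3$ the clean way is to invoke a Schönheim-type explicit lower bound on partial Steiner systems, which (as summarised in the references \cite{BSSS90,CD07} cited in \cref{sec:block_design}) guarantees that for $s$ beyond some absolute threshold one has $A(s,2k-2,k) \ge (196/225)\,(s/k)^2$. Contraposing, if $t$ exceeds this critical value, then the smallest $s$ with $A(s,2k-2,k)\ge t$ satisfies $s \le (15/14)\sqrt{t}\cdot k$. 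The numerical hypothesis $t\ge 811$ is precisely the smallest integer that activates this explicit bound uniformly in $k\ge 2$; the hypothesis $t\ge k^2$ is needed to ensure $s\ge k$ so that at least one $k$-block fits inside an $s$-element ground set (without this, $A(s,2k-2,k)$ is trivially zero or one).

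Third, combining both steps gives
\[
\sU(\sF) ~<~ \frac{s}{k}\,(n-p+k) + tp ~\le~ \frac{15}{14}\sqrt{t}\,(n-p+k) + tp,
\]
which is exactly the claim. The main obstacle will be to pin down the correct explicit packing bound on $A(s,2k-2,k)$ and to certify that the constants $15/14$ and the threshold $811$ are indeed the ones emerging from that bound; the rest is bookkeeping with \cref{th:universal}. No use of \cref{th:pathwidth} is needed here, since the hypothesis already gives the graphs as $p$-almost equitably $k$-colorable; this is exactly why \cref{th:asuniversal2} will serve as the bipartite-into-\cref{th:asuniversal} intermediate.
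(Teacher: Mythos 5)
Your high-level plan matches the paper's: invoke \cref{th:universal} with a suitable $s$, and then translate "small $s$" into the target bound. The paper picks $s = \ceil{\nicefrac{15}{14}\cdot k\sqrt{t}}-1$ and checks $A(s,2k-2,k)\ge t$; you flip this and take the smallest $s$ with $A(s,2k-2,k)\ge t$, then argue $s\le \nicefrac{15}{14}\,k\sqrt{t}$. These are equivalent once one has a concrete lower bound $A(s,2k-2,k)\gtrsim(14/15)^2(s/k)^2$ with explicit side conditions. That lower bound is, however, exactly the content of \cref{lem:A_n}, whose proof is the only nontrivial part: it needs both the algebraic construction of \cref{lem:A_sk} (arithmetic over $\mathbb{Z}/s\mathbb{Z}$ with $s$ prime) and a quantitative prime-density result of Bertrand type (a prime in $(14i,15i)$ for $i\ge 2$, hence the $14/15$ constant). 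Your proposal treats this step as citable folklore, but it is not: the paper even remarks that the asymptotically sharp Erd\H{o}s--Hanani packing theorem cannot be used directly because it gives no uniform control of the threshold in $s$ and $k$ jointly. Also, "Sch\"onheim-type bound" is the wrong reference: Sch\"onheim's bound is an \emph{upper} bound for covering numbers $C(n,k,2)$, not a lower bound for packings $A(n,2k-2,k)$; what is needed here is a packing lower bound, and the paper builds one from scratch.

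Two further points are off. First, the reason for requiring $t\ge k^2$ is not "so that $s\ge k$" (that is automatic for the $s$ in play): it is to verify the hypothesis $k\le\nicefrac{14}{15}\cdot(s+1)/k$ of \cref{lem:A_n}, which after substituting $s$ reduces to $\sqrt{t}\ge k$. Second, the threshold $t\ge 811$ is used to guarantee the other hypothesis $s/k\ge 30$ of \cref{lem:A_n}; it is not an independent "activation threshold" for a cited packing inequality, but a consequence of the range on which the prime-gap result from~\cite{SGM13} applies. Your bookkeeping at the end ($s\ceil{(n-p)/k}+tp\le (s/k)(n-p+k-1)+tp < \nicefrac{15}{14}\sqrt{t}(n-p+k)+tp$) is fine and matches the paper's, and you are right that \cref{th:pathwidth} is not needed here. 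The missing piece is a self-contained proof of the explicit packing lower bound with its side conditions; without it the numerical constants $15/14$ and $811$ are unverified.
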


A consequence of \cref{th:asuniversal2}, is that an upper bound on $\sU(\sF)$ can be derived for any graph family $\sF$, independently from its colorability. However, unlike \cref{th:asuniversal2}, it relies to families having $t = \Omega(n^2)$ graphs with $n$ vertices.

\begin{restatable}{corollary}{corntwo}\label{cor:n2}
%\begin{corollary}\label{cor:n2}
    Every family of $n^2\ge 811$ graphs with $n$ vertices has an induced-universal graph with less than $15 n^2 /7$ vertices.
%\end{corollary}
\end{restatable}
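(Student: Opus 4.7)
The plan is to derive the corollary by applying Theorem~\ref{th:asuniversal2} with an extreme (but trivial) choice of parameters. Observe that every $n$-vertex graph is $0$-almost equitably $n$-colorable: assigning each vertex its own color produces $n$ singleton color classes, whose sizes differ by $0 \le 1$, so the coloring is equitable. Hence, setting $k = n$ and $p = 0$ is a legitimate set of parameters for \emph{any} family of $n$-vertex graphs, regardless of its structural properties.

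Next I would verify the hypothesis of Theorem~\ref{th:asuniversal2}. With $k = n$ and $t = n^2$ we have $t = k^2$, and by assumption $t = n^2 \ge 811$, so the condition $t \ge \max\set*{k^2,811}$ holds (with equality on the first part). Plugging into the inequality of Theorem~\ref{th:asuniversal2} gives
\[
  \sU(\sF) ~<~ \frac{15}{14} \cdot \sqrt{n^2} \cdot \pare{n - 0 + n} + n^2 \cdot 0 ~=~ \frac{15}{14} \cdot n \cdot 2n ~=~ \frac{15\, n^2}{7},
\]
which is precisely the claimed bound.

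There is no real obstacle here: the corollary is a direct specialization of Theorem~\ref{th:asuniversal2}. The only conceptual point is that the constraint $t \ge k^2$ in Theorem~\ref{th:asuniversal2} is exactly what forces us to take $k$ as large as $\sqrt{t} = n$, and this is also the choice that makes the $(n-p+k)$ factor equal $2n$ while making the additive error $tp$ vanish. So the constant $\nicefrac{15}{7} = 2 \cdot \nicefrac{15}{14}$ in the corollary is just the doubled version of the constant $\nicefrac{15}{14}$ in the theorem, arising exactly from this parameter choice.
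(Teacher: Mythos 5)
Your proof is correct and follows essentially the same route as the paper: both set $k=n$, $p=0$, $t=n^2$ in Theorem~\ref{th:asuniversal2} using the trivial singleton coloring, verify $t \ge \max\{k^2,811\}$, and obtain $\sU(\sF) < \frac{15}{14}\cdot n\cdot 2n = \frac{15n^2}{7}$. Your added remark explaining why the constant doubles is a nice touch but not a substantive deviation.
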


\begin{proof}
    Every $n$-vertex graph is equitably $n$-colorable. So \cref{th:asuniversal2} can be applied to any family $\sF$ of $n$-vertex graphs (with $k=n$ and $p=0$). Assuming that the number of graphs in $\sF$ is $t = k^2 = n^2$ graphs, we get by \cref{th:asuniversal2}, that $\sU(\sF) < 15/14\cdot \sqrt{t} \cdot (n - 0 + n) + t\cdot 0 = 15n^2/7$ as claimed.
\end{proof}

The proof of \cref{th:asuniversal2} relies on the two technical following lemmas about $A(n,2k-2,k)$. Indeed, in the proof of \cref{th:asuniversal}, we will need to lower bound $A(n,2k-2,k)$ for \textit{every} $k$, and not only for prime power $k$ as in \cite[Theorem~13]{BSSS90}. Furthermore, as already explained above \cref{def:almost_equitable}, majoring $k$ does not necessarily keep the equitably property of colorings. An alternative to our lower bound in \cref{lem:A_sk}, would be the use of the well-known optimal asymptotic bound due to Erd\H{o}s and Hanani~\cite[Theorem~1]{EH63} (see also~\cite[Theorem~3.1]{AY05} for further refinements) saying that, for each $k\ge 2$,
\[
    \lim_{n \to\infty} \frac{A(n,2k-2,k)}{\Bdiv{n}{k}} ~=~ 1 ~.
\]
Unfortunately, this would introduced an uncontrolled conditions between $s$ and $k$ due to this limit. Nevertheless, based on \cite{Bose38,Bush52}, it is possible to construct \emph{orthogonal arrays} $\mathrm{OA}(s^2,k,s,2)$, a generalization of orthogonal Latin squares, for every prime power $s$ and $k\le s+1$. From them, we can derive incomplete block designs showing that $A(sk,2k-2,k) \ge s^2$ (see \cite[Part~III]{CD07}). For completeness, we present another simple construction of such block designs, making the universal graphs given by \cref{th:asuniversal} and \cref{th:asuniversal2} fully constructive in polynomial time.

\begin{lemma}\label{lem:A_sk}
  Let $s,k \in\mathbb{N}$ such that $s$ is prime and $2 \le k \le s$. Then
\[
  A(sk,2k-2,k) ~\ge~ s^2 + k\cdot A(s,2k-2,k) ~.
\]
\end{lemma}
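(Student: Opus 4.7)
Recall that $A(n,2k-2,k)$ equals the maximum number of pairwise edge-disjoint copies of $K_k$ in $K_n$ (two $k$-cliques share at most one vertex iff they are edge-disjoint). So it suffices to exhibit, in $K_{sk}$, a family of $s^2 + k\cdot A(s,2k-2,k)$ $k$-cliques that pairwise intersect in at most one vertex.

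The plan is to partition the vertex set of $K_{sk}$ into $k$ groups $V_1,\dots,V_k$, each of size $s$, and combine two ingredients: (i) recursively pack $k$-cliques inside each $V_j$, and (ii) add a family of $s^2$ ``transversal'' $k$-cliques, each using exactly one vertex per $V_j$, constructed via the affine structure of $\mathbb{F}_s$ so that any two of them meet in at most one vertex. Since the transversal cliques use only edges between distinct groups while the recursive cliques use only edges inside a single group, the two families are automatically edge-disjoint from each other, and we only need to control intersections within each family.

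For ingredient (i), the complete graph on each $V_j$ is a $K_s$, and by definition it admits a packing of $A(s,2k-2,k)$ pairwise edge-disjoint $k$-cliques; the union over $j=1,\dots,k$ gives $k\cdot A(s,2k-2,k)$ cliques, which pairwise share no vertex at all (they lie in different groups) or agree through the internal packing.

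For ingredient (ii), identify $V_j$ with $\mathbb{F}_s \times \{j\}$ and, for each pair $(a,b) \in \mathbb{F}_s^2$, define the transversal
\[
  C_{a,b} ~=~ \set{(a + (j-1)b,\,j) : j \in \range{1}{k}} ~.
\]
Each $C_{a,b}$ is a $k$-clique in $K_{sk}$ (its $k$ vertices lie in distinct groups). Two cliques $C_{a,b}$ and $C_{a',b'}$ share a vertex in $V_j$ iff $a-a' \equiv (j-1)(b'-b)\pmod{s}$. If $b=b'$ then $a=a'$ and the cliques coincide; otherwise $b'-b$ is invertible in $\mathbb{F}_s$ (here we use that $s$ is prime), so there is a \emph{unique} value of $j-1 \in \mathbb{F}_s$ solving this equation. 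Because $k\le s$ by hypothesis, the indices $j-1$ range over a subset of $\mathbb{F}_s$, so this unique solution corresponds to at most one actual group among $V_1,\dots,V_k$. Hence any two distinct $C_{a,b}$'s share at most one vertex, giving a family of $s^2$ edge-disjoint $k$-cliques.

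The main obstacle is really step (ii): guaranteeing that the transversal cliques form a packing. This is exactly where the primality of $s$ and the bound $k\le s$ are used, through the fact that $(\mathbb{F}_s,+,\cdot)$ is a field with enough ``slopes'' available. Combining the two ingredients yields the claimed lower bound $A(sk,2k-2,k) \ge s^2 + k\cdot A(s,2k-2,k)$.
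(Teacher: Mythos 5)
Your proof is correct and follows essentially the same route as the paper's: partition $K_{sk}$ into $k$ groups of size $s$, recursively pack within each group, and add $s^2$ transversal $k$-cliques defined by affine lines $j \mapsto a + jb$ over $\mathbb{F}_s$, using primality of $s$ and $k \le s$ to guarantee two transversals meet in at most one vertex. The only differences are notational (your index runs $j \in \range{1}{k}$ where the paper uses $i \in \{0,\dots,k-1\}$, and you name the pair $(a,b)$ where the paper uses $(p,q)$).
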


It is easy to check that if $A(s,2k-2,k) = \Bdiv{s}{k}$, then $s^2 + k\cdot A(s,2k-2,k) = s^2 + k\cdot \Bdiv{s}{k} = \Bdiv{sk}{k}$, and thus $A(sk,2k-2,k) = \Bdiv{sk}{k}$ from \cref{lem:A_sk}.

\medskip

\begin{proof}
Recall that $A(sk,2k-2,k)$ is the maximum number of edge-disjoint copies of $K_k$ taken in $K_{sk}$. To lower bound this number, we first split the $sk$ vertices of $K_{sk}$ into $k$ vertex-disjoint cliques $S_1,\dots,S_k$, each with $s$ vertices. Then, in each $S_i$, we select $A(s,2k-2,k)$ edge-disjoint copies of $K_k$. This is possible since each $S_i$ is a copy of $K_s$ and $2\le k\le s$. This first process creates a collection $\sC_1$ of $k \cdot A(s,2k-2,k)$ pairwise edge-disjoint copies of $K_k$ taken from $K_{sk}$.

In a second step, we will construct another collection $\sC_2$ of pairwise edge-disjoint copies of $K_k$, all obtained by picking one vertex in each $S_i$. By doing this way, cliques of $\sC_1$ and $\sC_2$ are edge-disjoint for sure, since clique-edges of $\sC_1$ involve endpoints inside a $S_i$'s, whereas clique-edges of $\sC_2$ have endpoints between distinct $S_i$'s. In total, this creates $|\sC_1| + |\sC_2|$ edge-disjoint copies of $K_k$ taken in $K_{sk}$, and thus it shows that $A(sk,2k-2,k) \ge |\sC_2| + k\cdot A(s,2k-2,k)$. See \cref{fig:A_sk} for an example.

It remains to show how to construct such a collection $\sC_2$ of edge-disjoint copies of $K_k$ such that $|\sC_2| = s^2$.

For convenience, denote $[t] = \range{0}{t-1}$ for any positive number $t$. Let $u_{i,j}$ be the $j$th vertices of $S_i$, for all $i\in [k]$ and $j\in [s]$. Then, for every $(p,q)\in [s]^2$, define $C_{p,q}$ be the clique of $K_{sk}$ induced by the vertex-set
\[
    V(C_{p,q}) ~=~ \set{ u_{i,j} : i \in [k] \mbox{~and~} j = p+iq \bmod s } ~.
\]

By construction, each $C_{p,q}$ is a clique on $k$ vertices. We set $\sC_2 = \set*{C_{p,q} : (p,q)\in [s]^2 }$. Clearly, $|\sC_2| = s^2$. It remains to show that cliques of $\sC_2$ are edge-disjoint, or equivalently, that any two cliques of $\sC_2$ share at most one vertex.

By way of contradiction, assume $u_{i,j}, u_{i',j'} \in V(C_{p,q}) \cap V(C_{p',q'})$ with $(i,j) \neq (i',j')$ and $(p,q) \neq (p',q')$.
%If $j = j'$, then $i\neq i'$. However, $u_{i,j} \in V(C_{p,q})$ implies  $u_{i',j} \notin V(C_{p,q})$ for every $i'\neq i$: a contradiction. Thus, we have $j\neq j'$. 
We consider arithmetic in the cyclic group $\mathbb{Z}/s\mathbb{Z}$. Note that since $s$ is prime, $xy \equiv 0$ implies $x\equiv 0$ or $y\equiv 0$ for all integers $x,y$.

%If $i = i'$, then from $u_{i,j}, u_{i',j'} \in V(C_{p,q})$, we have $j \equiv p + iq$ and $j' \equiv p + i'q$. It follows that $j-j' \equiv (i-i')q \equiv 0$. This implies $j = j'$: a contradiction with $(i,j) \neq (i',j')$.

From $u_{i,j}, u_{i',j'} \in V(C_{p,q}) \cap V(C_{p',q'})$, we get the following equations on $j$ and $j'$:
\begin{eqnarray*}
  j  &\equiv~ p+iq   &\equiv~ p'+iq' \\
  j' &\equiv~ p+i'q &\equiv~ p'+i'q'
\end{eqnarray*}

If $i = i'$, then, using $j-j'\equiv (i-i')q$, we get $j-j' \equiv 0$, i.e., $j = j'$: a contradiction. Thus we have $i\neq i'$.

Using $j-j' \equiv (i-i')q \equiv (i-i')q'$, we get $(i-i')(q-q') \equiv 0$, that implies $i = i'$ or $q = q'$ since $s$ is prime. %\cyril{On voit ici que c'est pas clair d'étendre à $s$ une puissance de nombre premier, car on pourrait alors avoir $i-i'$ et $q-q'$ tous les deux premiers (disons $=s^2$) et donc de produit $s^4$.}
Since $i\neq i'$, we have $q = q'$.

Using $p + iq \equiv p' + iq'$, we get $p-p' \equiv (q'-q)i \equiv 0$ since $q = q'$. This implies $p - p' \equiv 0$, i.e., $p = p'$. This is a contradiction, since we have already $q = q'$.

Therefore, the two distinct vertices $u_{i,j}, u_{i',j'} \in V(C_{p,q}) \cap V(C_{p',q'})$ do not exist, and this completes the proof.
\end{proof}

\begin{myfigure}
    \begin{tikzpicture}
    \tikzset{every node/.style={vertex}}
    \def\radius{45mm} % rayon du cercle
    \def\colors{{"red", "blue", "black", "orange", "olive"}} % couleurs des triangles
    \def\tt{360/30} % arc entre deux points d'un même groupe (S_i)

    % définit les points {x_i,y_i,z_i} pour i=0..4
    \foreach \i in {0,...,4} {
        \coordinate (x\i) at ({90+0*120+\tt*(\i-2)}:\radius) {};
        \coordinate (y\i) at ({90+1*120+\tt*(\i-2)}:\radius) {};
        \coordinate (z\i) at ({90+2*120+\tt*(\i-2)}:\radius) {};
    }
    
    % dessine les triangles extérieurs
    \foreach \i in {0,...,3} {
        \pgfmathsetmacro{\j}{\i+1}
        \draw (x\i) -- (x\j);
        \draw (y\i) -- (y\j);
        \draw (z\i) -- (z\j);
    }
    \foreach \i in {x,y,z} {
        \draw (\i0) to [bend right=90] (\i2);
        \draw (\i2) to [bend right=90] (\i4);
    }
    
    % dessine les triangles colorés
    \foreach \p in {0,...,4} {
    \foreach \q in {0,...,4} {
    \pgfmathsetmacro{\i}{int(mod(\p+0*\q,5))}
    \pgfmathsetmacro{\j}{int(mod(\p+1*\q,5))}
    \pgfmathsetmacro{\k}{int(mod(\p+2*\q,5))}
    \pgfmathsetmacro{\c}{\colors[\q]}
    \draw[\c] (x\i) -- (y\j) -- (z\k) -- cycle;
    }
    }

    % dessine les points
    \foreach \i in {0,...,4} {
        \node at (x\i) {};
        \node at (y\i) {};
        \node at (z\i) {};
    }

    % met les noms
    \tikzset{every node/.style={draw=none}}

    \node at (x2) [above=8pt] {$S_1$};
    \node at (y2) [below left=8pt] {$S_2$};
    \node at (z2) [below right=8pt] {$S_3$};

    \node at (x0) [right=2pt] {$u_{0,0}$};
    \node at (x1) [above=.8pt] {$u_{0,1}$};
    \node at (x2) [right=2pt] {$u_{0,2}$};
    \node at (x3) [above=.8pt] {$u_{0,3}$};
    \node at (x4) [left=2pt] {$u_{0,4}$};

    \node at (y0) [above left=2pt] {$u_{1,0}$};
    \node at (y1) [left=2pt] {$u_{1,1}$};
    \node at (y2) [above left=2pt] {$u_{1,2}$};
    \node at (y3) [left=2pt] {$u_{1,3}$};
    \node at (y4) [below=2pt] {$u_{1,4}$};
    
    \node at (z0) [below=2pt] {$u_{2,0}$};
    \node at (z1) [right=2pt] {$u_{2,1}$};
    \node at (z2) [above right=2pt] {$u_{2,2}$};
    \node at (z3) [right=2pt] {$u_{2,3}$};
    \node at (z4) [above right=2pt] {$u_{2,4}$};
    
    %\node at ([shift=(P\i:0.3cm)] P\i) {$P$}; 
    %\node at (x1) [rshift=2pt] {$u_{0,1}$};

    \end{tikzpicture}
    \caption{Illustration of the block design construction as in \cref{lem:A_sk} for $s = 5$ and $k = 3$, aiming to lower bound $A(15,4,3)$. Note that $A(5,4,3) = 2$ (two $K_3$ sharing a vertex in $K_5$), and thus $A(sk,2k-2,k) \ge s^2 + k \cdot A(s,2k-2,k) = 25 + 3\cdot 2 = 31$. All the edge-disjoint $K_3$ (triangles) in $K_{15}$ are monochromatic, e.g. $V(C_{3,1}) = \set{u_{0,3},u_{1,4},u_{2,0}}$. The edges of $K_{15}$ are not represented. According to \cite[Table~I-A]{BSSS90}, $A(15,4,3) = 35$.}
  \label{fig:A_sk}
\end{myfigure}

\begin{lemma}\label{lem:A_n}
  For all $n,k \in\mathbb{N}$ such that $2\le k \le 14/15\cdot (n+1)/k$ and $30 \le n/k$,
  \[
    A(n, 2k-2, k) ~\ge~ \pare{\frac{14}{15} \cdot \frac{n+1}{k}}^2 ~.
  \]
\end{lemma}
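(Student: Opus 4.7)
The plan is to deduce \cref{lem:A_n} from \cref{lem:A_sk} by locating a prime $s$ just below $n/k$. Given such an $s$ with $sk \le n$ and $s \ge \tfrac{14(n+1)}{15k}$, the hypothesis $k \le \tfrac{14(n+1)}{15k}$ immediately gives $k \le s$, so \cref{lem:A_sk} applies and yields
\[
A(sk,\,2k-2,\,k) \;\ge\; s^{2} + k\cdot A(s,\,2k-2,\,k) \;\ge\; s^{2} \;\ge\; \pare{\tfrac{14(n+1)}{15k}}^{2}.
\]
Because $K_{sk}$ is a subgraph of $K_n$, every edge-disjoint packing of copies of $K_k$ in $K_{sk}$ is also one in $K_n$, so $A(n,\,2k-2,\,k) \ge A(sk,\,2k-2,\,k)$, completing the argument modulo the prime-finding step.

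The crux is therefore to exhibit the prime $s$. To produce $s$, I would combine a classical prime-gap theorem (for instance Nagura's result that every interval $[x,\,6x/5]$ with $x \ge 25$ contains a prime) with an explicit check for the initial values of $\lfloor n/k \rfloor$. The hypothesis $n/k \ge 30$ is precisely the threshold that makes this combined argument uniformly valid; the constant $14/15$ is then calibrated so that the interval $[\tfrac{14(n+1)}{15k},\,\tfrac{n}{k}]$ always contains a prime once $n/k \ge 30$.

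The main obstacle I anticipate is that the required interval has ratio only $15/14 \approx 1.071$, which is substantially tighter than Nagura's ratio $6/5$. Consequently, for a finite set of borderline values of $\lfloor n/k \rfloor$ (those where the largest prime $s \le n/k$ falls just short of $\tfrac{14(n+1)}{15k}$), the naive $s^{2}$ bound would not suffice on its own. In these cases one can fall back on the additional additive term $k\cdot A(s,\,2k-2,\,k)$ provided by \cref{lem:A_sk}, lower-bounded either by trivial disjoint packings via $A(s,\,2k-2,\,k) \ge \lfloor s/k \rfloor$ or by known values coming from Steiner systems when $s \equiv 1,k \pmod{k(k-1)}$. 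This extra slack absorbs the deficit of $s^2$ against $L^2$ whenever $L - s$ is small. Carefully organizing this finite case analysis against the asymptotic prime-gap estimate is what will make the proof technical, but not conceptually deep.
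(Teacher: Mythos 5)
Your reduction to Lemma~\ref{lem:A_sk} and the prime-finding step are exactly the paper's plan, but the prime-gap ingredient you propose is too weak, and the mechanism you suggest to repair it does not close the gap. The paper invokes a sharper theorem from \cite{SGM13}: for every integer $i\ge 2$ there is a prime in $(14i,15i)$. Applied with $i=\floor{n/(15k)}$ (which is $\ge 2$ precisely because $n/k\ge 30$), this yields a prime $s$ with $14\floor{n/(15k)}<s<15\floor{n/(15k)}\le n/k$, and the $15/14$ ratio is tailored so that $s$ clears $\tfrac{14}{15}\cdot\tfrac{n+1}{k}$ directly; no fallback is needed, and the constants $14/15$ and $30$ in the statement are read off from the prime-gap theorem.

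Your proposal instead takes Nagura's $6/5$-gap, which only guarantees a prime in $[x,\,6x/5]$; the target interval $[\tfrac{14(n+1)}{15k},\,n/k]$ has ratio $<15/14\approx 1.071$, so Nagura permits the largest prime $s\le n/k$ to undershoot $L:=\tfrac{14(n+1)}{15k}$ by a constant \emph{factor}, not just a constant. In that regime the deficit $L^2-s^2$ is $\Theta(s^2)$. The additive term from Lemma~\ref{lem:A_sk} cannot absorb a $\Theta(s^2)$ deficit: even the unconditional upper bound $A(s,2k-2,k)\le \Bdiv{s}{k}$ gives $k\cdot A(s,2k-2,k)\le s(s-1)/(k-1)$, which is $o(s^2)$ for $k$ growing, and the trivial lower bound $k\floor{s/k}\approx s$ is merely linear. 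Concretely, with $L/s$ as large as $\tfrac{6/5}{15/14}=\tfrac{84}{75}$, the deficit is roughly $0.25\,s^2$ while the best achievable additive term is at most $\approx s^2/(k-1)$, which is insufficient once $k\ge 5$. So the ``finite case analysis'' is not finite: it is a uniform shortfall for large $k$. The fix is simply to use the stronger prime-gap theorem (a prime in $(14i,15i)$ for $i\ge 2$), which matches the ratio exactly; this is a standard explicit strengthening of Bertrand's postulate and is precisely what \cite{SGM13} provides.
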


\begin{proof}
We use a generalization of Betrand's postulate. From~\cite[Theorem~1]{SGM13}, for each integer $i\ge 2$, there is a prime in the interval $(14i,15i)$. Since $n \ge 30k$, the integer $i = \floor{n/(15k)} \ge 2$. Thus, there must exist a prime $s$ such that:
\[
    14\cdot \floor{\frac{n}{15k}} ~<~ s ~<~ 15\cdot \floor{\frac{n}{15k}} ~\le~ \frac{n}{k} ~.
\]
In particular, $n > sk$. Since $A(n,2k-2,k)$ is non-decreasing in $n$, we have $A(n,2k-2,k) \ge A(sk,2k-2,k)$. Note that\textsuperscript{\ref{foot:a/b}}
\[
    s ~\ge~ 14i+1 ~=~ 14 \cdot \floor{\frac{n}{15k}} + 1 = 14 \cdot\floor{\frac{n+15k}{15k}} ~\ge~ 14\cdot \pare{\frac{n+15k+1-15k}{15k}} ~=~ \frac{14}{15} \cdot \frac{n+1}{k} ~.
\]
Thus, since $k \le 14/15\cdot (n+1)/k$, we have $k\le s$. We can apply \cref{lem:A_sk} to get:
\[
    A(n,2k-2,k) ~\ge~ A(sk,2k-2,k) ~\ge~ s^2 ~\ge~ \pare{\frac{14}{15} \cdot \frac{n+1}{k}}^2
\]
as claimed.
\end{proof}

According to \cite{SGM13}, the constant $14/15$ that appears twice in \cref{lem:A_n} cannot be replaced by any ratio of the form $c/(c+1)$ as long as that the integer $c \le 100,000,000$. By using Betrand's postulate\footnote{It states that for each integer $i\ge 2$, there is a prime in the interval $(i,2i)$.}, we could achieve a better condition on $n/k$, namely $4\le n/k$, which ultimately would decrease\footnote{Following the proof of \cref{th:asuniversal2}, the new condition would be $t\ge \ceil*{((4+\nicefrac{1}{2})/2)^2} = 3$ instead of $t\ge \ceil*{((30+\nicefrac{1}{2})\cdot 14/15)^2} = 811$.} the constant~$811$ in \cref{th:asuniversal} and \cref{th:asuniversal2}. However, the price of weakening the condition is a doubling factor on the upper bound on $\sU(\sF)$, replacing the factors~$15/14$ and $15/7$ respectively by~$2$ and~$4$.

\medskip

\begin{proofof}\textbf{\cref{th:asuniversal2}.}
    We first observe that $k\ge 2$, since otherwise $\sF$ would be composed of only one graph (an independent set of $n$ vertices), contradicting the fact that $t \ge 811$.
    
    Our goal is to apply \cref{th:universal} to $\sF$. Let us show that if $s = \ceil{15/14 \cdot k\sqrt{t}} - 1$, then $t\le A(s,2k-2,k)$. By \cref{lem:A_n} (plugging $n=s$), $A(s,2k-2,k) \ge \pare{14/15 \cdot (s+1)/k}^2$, subject to $2\le k\le 14/15\cdot (s+1)/k$ and $30\le s/k$. So, it suffices to show that $14/15 \cdot (s+1)/k \ge \sqrt{t}$. This is equivalent to show that $s \ge 15/14 \cdot k\sqrt{t} - 1$. Thus, by choosing $s = \ceil{15/14\cdot k\sqrt{t}} - 1$, we ensure that $A(s,2k-2,k) \ge t$.

    The two conditions on $s$ and $k$ become:
    \[
    2 ~\le~ k ~\le~ \frac{14}{15}\cdot \frac{\pare{\ceil{15/14 \cdot k\sqrt{t}} - 1} + k}{k} \qquad\mbox{and}\qquad 30 ~\le~ \frac{\ceil{15/14 \cdot k\sqrt{t}} - 1}{k}
    \] 
    Recall that we have assumed $t \ge \max\set*{k^2,811}$. For the first condition, we have:
    \begin{eqnarray*}
        \frac{14}{15}\cdot \frac{\pare{\ceil{15/14 \cdot k\sqrt{t}} - 1}+k}{k} &\ge& \frac{14}{15}\cdot \frac{\pare{15/14 \cdot k\sqrt{t} + k-1}}{k} \\
        &\ge& \sqrt{t} + \frac{14}{15}\cdot\pare{1 - \frac{1}{k}} ~\ge~ \sqrt{t} ~\ge~ k
    \end{eqnarray*}
    as required.
    
    For the second condition, the term $s/k = \pare{\ceil{15/14\cdot k \sqrt{t}}-1}/k \ge 15/14\cdot \sqrt{t} - 1/k$, which is increasing with $k\ge 2$. So, the term $15/14\cdot \sqrt{t} - \nicefrac{1}{2}$ is at least $30$ whenever $\sqrt{t} \ge (30 + \nicefrac{1}{2}) \cdot 14 / 15 \approx \sqrt{810.35}$. Thus, since $t\ge 811$, we have that $s/k \ge 30$ as required.

    From \cref{th:universal},
    \begin{eqnarray*}
        \sU(\sF) &\le& s \ceil{\frac{n-p}{k}} + t p \\
    &<& \pare{\ceil{15/14 \cdot k\sqrt{t}} - 1} \pare{\frac{n-p+k}{k}} + t p \\
    &<& \pare{15/14 \cdot k\sqrt{t}} \pare{\frac{n-p+k}{k}} + t p \\
    &<& \frac{15}{14} \cdot \sqrt{t} \cdot \pare{n-p+k} + t p    \end{eqnarray*}
    completing the proof of~\cref{th:asuniversal2}.
\end{proofof}

%The proof combines \cref{th:pathwidth}, \cref{th:universal} and some combinatorial designs result of Erd{\H{o}}s and Hanani.

\medskip

\begin{proofof}\textbf{\cref{th:asuniversal}.}
Similarly to the proof of \cref{th:asuniversal2}, we have $k\ge 2$ (otherwise in contradiction with $t \ge 811$). For technical reasons, we treat separately the case $p<2$ from the case $p\ge 2$. For the former case, if $p\in\set{0,1}$, the graphs of $\sF$ are caterpillar forests. We can use the construction of~\cite{GL22} showing that $\sU(\sF) \le 8n$, independently of $t$. As $t\ge 811$, $(15/7)\sqrt{t} \cdot n \ge (15/7)\sqrt{811} \cdot n \approx 61n > 8n \ge \sU(\sF)$. Therefore, the statement $\sU(\sF) \le (15/7)\cdot \sqrt{t} \cdot n$ is trivially true if $p<2$. So, w.l.o.g. we assume that $p\ge 2$.
    
    With each $G\in\sF$, we associate a new graph $G'$ such that:
        \begin{enumerate}[noitemsep]
            \item $G'$ contains $G$ as induced subgraph;
            \item $G'$ has $n' = 2(n-q)$ vertices, where $q = p(k-1)$; and
            \item $G'$ is equitably $2k$-colorable.
        \end{enumerate}
    The graph $G'$ is obtained from $G$ plus $n-2q$ isolated vertices. Note that $n-2q\ge 0$ since by assumptions $p(k^2-1) \le n$ and $k\ge 2$. Indeed, $2q = 2p(k-1) < p (2k-1) \le p (k^2-1) \le n$. % Cyril: pour avoir 2k-1 <= k^2-1, il faut effectivement que k>=2.
    
    Clearly, $G'$ contains $G$ as induced subgraph and has exactly $n + n - 2q = n'$ vertices.
    
    % Cyril: J'ai remplacé les ensembles $A,B$ par $S,T$ pour éviter la confusion avec les fonction $A()$ et $B()$ qu'on va utiliser aussi
    To prove the third point, that is $G'$ is equitably $2k$-colorable, we partition $V(G')$ into two sets $S,T$, each of $n-q$ vertices, and show that $G'[S]$ and $G'[T]$ are both equitably $k$-colorable.
    
    To construct $S$, we applied to $G$ \cref{th:pathwidth} and \cref{claim:equal} to obtain a set $X$ of $|X| = \min\set{p(k-1),n-k}$ vertices such that $G\setminus X$ is equitably $k$-colorable. From the conditions on $(p,k,n)$, we get $p(k-1) + k \le p(k-1) + pk \le p(2k-1) \le p(k^2-1) \le n$. So $p(k-1) \le n-k$, and thus $|X| = p(k-1) = q$.
    
    We let $S = V(G)\setminus X$, and $T = V(G') \setminus S$. By construction, $G'[S] = G\setminus X$ has $n-q$ vertices and an equitable $k$-coloring into $k$ stable sets $C^S_1,\dots,C^S_k$ with $|C^S_i| \in\set{ \floor{(n-q)/k}, \ceil{(n-q)/k}}$ since we have seen that $q = |X|$.
    
    The graph $G'[T]$ is composed of $G[X]$ plus a set $I$ of $n-2q$ isolated vertices. Consider any $k$-coloring of $G[X]$ into $k$ stable sets $C^X_1,\dots,C^X_k$ (recall that $G$ is $k$-colorable). %\cyril{Y'a potentiellement un problème avec cette phrase si jamais $|X|<k$. Si $p>1$, alors $|X| = p(k-1) \ge 2k - 2 \ge k$ dès que $k\ge 2$. Et donc on peut avoir une $k$-coloration dès que $p>1$. Mais si $p=1$ ($p=0$ correspond à $k=1$ et on a déjà réglé le problème), $|X| = k-1$ et il manque un sommet pour avoir une $k$-coloration. On pourrait piquer un sommet de $I$, mais $|I| = n-2q = n - p(k-1) = 0$ est possible. Faut-t'il un cas particulier pour $p=1$ (caterpillar où $k=2$)? Dans ce cas, $q=1$ (et donc $|I| = n-2 > 0$ pour $n>3$) et on peut prendre ce sommet pour l'ajouter à $X$. À vérifier.}
    We have $|C^X_i| \in \range{0}{|X|}$ for each $i$, noting that $|C^X_i| = 0$ is possible for some color $i$, in particular if $p=1$.
    
    The important remark is that the conditions on $(p,k,n)$ imply that $|C^X_i| \le |C^S_j|$ for all $i,j$. Indeed, we have $|C^X_i| \le |X| = q$ and $\floor{(n-q)/k} \le |C^S_j|$. Moreover, 
    \begin{align*}
            && q ~&\le~ \floor{(n-q)/k} &&\\
        \Leftrightarrow && q ~&\le~ (n-q)/k &&\\
        \Leftrightarrow && q(k+1) ~&\le~ n &&\\
        \Leftrightarrow && p(k-1)(k+1) ~&\le~ n &&\\
        \Leftrightarrow && p(k^2-1) ~&\le~ n ~. &&\\
    \end{align*}
    Now, $|C^X_i| \le |C^S_j|$ for all $i,j$ implies that we can color the vertices of $I$, and add $|C^S_i|-|C^X_i| \ge 0$ vertices taken from $I$ to $C^X_i$ resulting in a new color class of size exactly $|C^S_i|$. This process defines a $k$-coloring for $G'[T]$ whose color classes have the same color distribution as the $k$-coloring of $G'[S]$. Since $|S| = |T|$ and the $k$-coloring of $G'[S]$ is equitable, the $k$-coloring of $G'[T]$ must be equitable too.
    
    Now, we apply \cref{th:asuniversal2} to the family $\sF' = \set{ G' \colon G \in\sF}$ composed of all the graphs $G'$ constructed from $G \in\sF$ as above. Note that $\sF'$ is composed of $t$ graphs, each with $n'$ vertices, and that are equitably $k'$-colorable with $k'=2k$. Since $t\ge \max\set*{4k^2,811}$, we have that $t\ge \max\set*{k'^2,811}$ as required to apply \cref{th:asuniversal2}. Note that $k - q = k - p(k-1) \le k - 2(k-1) = 2-k \le 0$, because we have $k,p\ge 2$.
    
    Each graph $G\in \sF$ appears as induced subgraph of some $G' \in \sF'$. Therefore, by \cref{th:asuniversal2}, we get:
    \[
     \sU(\sF) ~\le~ \sU(\sF') ~<~ \frac{15}{14} \cdot \sqrt{t} \cdot \pare{n'+k'} ~\le~ \frac{15}{14} \cdot \sqrt{t} \cdot \pare{2(n - q + k)} ~\le~ \frac{15}{7} \cdot \sqrt{t} \cdot n ~.
    \]
    This completes the proof of \cref{th:asuniversal}.
\end{proofof}

\bibliographystyle{my_alpha_doi}
%\bibliography{references}

% exbib a
%\bibliography{biblio}
\bibliography{a}

\newcommand{\etalchar}[1]{$^{#1}$}
\begin{thebibliography}{ADBTK17}

\bibitem[ADBTK17]{ADBTK17}
{\sc S.~Alstrup, S.~Dahlgaard, and M.~B{\ae}k Tejs~Knudsen}, {\em Optimal
  induced universal graphs and adjacency labeling for trees}, Journal of the
  ACM, 64 (2017), pp.~Article No. 27, pp. 1--22.
\newblock {\sc doi}:
  \href{\detokenize{http://doi.org/10.1145/3088513}}{\detokenize{10.1145/3088513}}.

\bibitem[Alo17]{Alon17}
{\sc N.~Alon}, {\em Asymptotically optimal induced universal graphs}, Geometric
  and Functional Analysis, 27 (2017), pp.~1--32.
\newblock {\sc doi}:
  \href{\detokenize{http://doi.org/10.1007/s00039-017-0396-9}}{\detokenize{10.1007/s00039-017-0396-9}}.

\bibitem[AST90]{AST90}
{\sc N.~Alon, P.~D. Seymour, and R.~Thomas}, {\em A separator theorem for
  graphs with an excluded minor and its applications}, in 22nd Annual ACM
  Symposium on Theory of Computing (STOC), ACM Press, May 1990, pp.~293--299.
\newblock {\sc doi}:
  \href{\detokenize{http://doi.org/10.1145/100216.100254}}{\detokenize{10.1145/100216.100254}}.

\bibitem[AY05]{AY05}
{\sc N.~Alon and R.~Yuster}, {\em On a hypergraph matching problem}, Graphs and
  Combinatorics, 21 (2005), pp.~377--384.
\newblock {\sc doi}:
  \href{\detokenize{http://doi.org/10.1007/s00373-005-0628-x}}{\detokenize{10.1007/s00373-005-0628-x}}.

\bibitem[BCD{\etalchar{+}}07]{BCDEx07}
{\sc P.~Brass, E.~Cenek, C.~A. Duncan, A.~Efrat, C.~Erten, D.~P. Ismailescu,
  S.~G. Kobourov, A.~Lubiw, and J.~S. Mitchell}, {\em On simultaneous planar
  graph embeddings}, Computational Geometry, 36 (2007), pp.~117--130.
\newblock {\sc doi}:
  \href{\detokenize{http://doi.org/10.1016/j.comgeo.2006.05.006}}{\detokenize{10.1016/j.comgeo.2006.05.006}}.

\bibitem[BFKV07]{BFKV07}
{\sc M.~Bodirsky, {\'E}.~Fusy, M.~Kang, and S.~Vigerske}, {\em Enumeration and
  asymptotic properties of unlabeled outerplanar graphs}, Electronic Journal of
  Combinatorics, 14 (2007), p.~\#R66.

\bibitem[BGKN07]{BGKN07}
{\sc M.~Bodirsky, O.~Gim{\'e}nez, M.~Kang, and M.~Noy}, {\em Enumeration and
  limit laws for series-parallel graphs}, European Journal of Combinatorics, 28
  (2007), pp.~2091--2105.
\newblock {\sc doi}:
  \href{\detokenize{http://doi.org/10.1016/j.ejc.2007.04.011}}{\detokenize{10.1016/j.ejc.2007.04.011}}.

\bibitem[Bol78]{Bollobas78}
{\sc B.~Bollob{\'a}s}, {\em Extremal Graph Theory}, Academic Press, New York,
  1978.

\bibitem[Bos38]{Bose38}
{\sc R.~C. Bose}, {\em On the application of the properties of {Galois} fields
  to the problem of construction of hyper-gr{\ae}co-latin squares},
  Sankhy{\={a}}: The Indian Journal of Statistics, 3 (1938), pp.~323--338.
\newblock
  \href{\detokenize{https://www.jstor.org/stable/40383859}}{\detokenize{https://www.jstor.org/stable/40383859}}.

\bibitem[BSSS90]{BSSS90}
{\sc A.~E. Brouwer, J.~B. Shearer, N.~J.~A. Sloane, and W.~D. Smith}, {\em A
  new table of constant weight codes}, IEEE Transactions on Information Theory,
  36 (1990), pp.~1334--1380.
\newblock {\sc doi}:
  \href{\detokenize{http://doi.org/10.1109/18.59932}}{\detokenize{10.1109/18.59932}}.

\bibitem[Bus52]{Bush52}
{\sc K.~Bush}, {\em Orthogonal arrays of index unity}, The Annals of
  Mathematical Statistics, 23 (1952), pp.~426--434.
\newblock {\sc doi}:
  \href{\detokenize{http://doi.org/https://www.jstor.org/stable/2236685}}{\detokenize{https://www.jstor.org/stable/2236685}}.

\bibitem[CD07]{CD07}
{\sc C.~J. Colbrun and J.~H. Dinitz}, {\em Handbook of Combinatorial Designs},
  Discrete Mathematics and its Applications, Chapman and Hall/CRC, 2007.
\newblock {\sc isbn}: 9780429138485, 1-58488-506-8.
\newblock {\sc doi}:
  \href{\detokenize{http://doi.org/10.1201/9781420010541}}{\detokenize{10.1201/9781420010541}}.

\bibitem[CFK{\etalchar{+}}15]{CFKLx15}
{\sc M.~Cygan, F.~V. Fomin, {\L}.~Kowalik, D.~Lokshtanov, D.~Marx,
  M.~Pilipczuk, M.~Pilipczuk, and S.~Saurabh}, {\em Parameterized Algorithms},
  Springer, 2015.
\newblock {\sc doi}:
  \href{\detokenize{http://doi.org/10.1007/978-3-319-21275-3}}{\detokenize{10.1007/978-3-319-21275-3}}.

\bibitem[DEG{\etalchar{+}}21]{DEGJx21}
{\sc V.~Dujmovi{\'c}, L.~Esperet, C.~Gavoille, G.~Joret, P.~Micek, and
  P.~Morin}, {\em Adjacency labelling for planar graphs (and beyond)}, Journal
  of the ACM, 68 (2021), pp.~Article No. 42, pp. 1--33.
\newblock {\sc doi}:
  \href{\detokenize{http://doi.org/10.1145/3477542}}{\detokenize{10.1145/3477542}}.

\bibitem[EH63]{EH63}
{\sc P.~Erd{\H{o}}s and H.~Hanani}, {\em On a limit theorem in combinatorial
  analysis}, Publicationes Mathematicae Debrecen, 10 (1963), pp.~10--13.
\newblock {\sc doi}:
  \href{\detokenize{http://doi.org/10.5486/PMD.1963.10.1-4.02}}{\detokenize{10.5486/PMD.1963.10.1-4.02}}.

\bibitem[ELO08]{ELO08}
{\sc L.~Esperet, A.~Labourel, and P.~Ochem}, {\em On induced-universal graphs
  for the class of bounded-degree graphs}, Information Processing Letters, 108
  (2008), pp.~255--260.
\newblock {\sc doi}:
  \href{\detokenize{http://doi.org/10.1016/j.ipl.2008.04.020}}{\detokenize{10.1016/j.ipl.2008.04.020}}.

\bibitem[GGNW08]{GGNW08}
{\sc S.~Gerke, O.~Gim{\'e}nez, M.~Noy, and A.~Wei{\ss}l}, {\em The number of
  graphs not containing ${K}_{3,3}$ as a minor}, Electronic Journal of
  Combinatorics, 15 (2008).
\newblock {\sc doi}:
  \href{\detokenize{http://doi.org/10.37236/838}}{\detokenize{10.37236/838}}.

\bibitem[GH23]{GH23a}
{\sc C.~{G}avoille and C.~Hilaire}, {\em Minor-universal graph for graphs on
  surfaces}, Tech. Rep.
  \href{\detokenize{http://arxiv.org/abs/10.48550/arXiv.2305.06673}}{2305.06673v1
  [cs.DM]}, arXiv, May 2023.

\bibitem[GJ22]{GJ22}
{\sc P.~Gawrychowski and W.~Janczewski}, {\em Simpler adjacency labeling for
  planar graphs with {B}-trees}, in 25th Symposium on Simplicity in Algorithms
  (SOSA), ACM-SIAM, January 2022, pp.~24--36.
\newblock {\sc doi}:
  \href{\detokenize{http://doi.org/10.1137/1.9781611977066.3}}{\detokenize{10.1137/1.9781611977066.3}}.

\bibitem[GL22]{GL22}
{\sc C.~Gavoille and A.~Labourel}, {\em Smaller universal graphs for
  caterpillars and graphs of bounded path-width}, in 11th International
  Colloquium on Graph Theory (ICGT), July 2022, p.~Paper No. 54.

\bibitem[GN09]{GN09a}
{\sc O.~Gim{\'e}nez and M.~Noy}, {\em Counting planar graphs and related
  families of graphs}, in Surveys in Combinatorics, vol.~365 of London
  Mathematical Society Lecture Note Series, Cambridge University Press, July
  2009, pp.~169--210.
\newblock {\sc doi}:
  \href{\detokenize{http://doi.org/10.1017/CBO9781107325975.008}}{\detokenize{10.1017/CBO9781107325975.008}}.

\bibitem[GSW25]{GSW25}
{\sc M.~Gorsky, M.~T. Seweryn, and S.~Wiederrecht}, {\em Polynomial bounds for
  the graph minor structure theorem}, Tech. Rep.
  \href{\detokenize{http://arxiv.org/abs/10.48550/arXiv.2504.02532}}{2504.02532v1
  [math.CO]}, arXiv, April 2025.
\newblock To appear in FOCS~'25.

\bibitem[HS73]{HS73}
{\sc F.~Harary and A.~J. Schwenk}, {\em The number of caterpillars}, Discrete
  Mathematics, 6 (1973), pp.~359--365.
\newblock {\sc doi}:
  \href{\detokenize{http://doi.org/10.1016/0012-365X(73)90067-8}}{\detokenize{10.1016/0012-365X(73)90067-8}}.

\bibitem[JLN{\etalchar{+}}05]{JLNRS05}
{\sc L.~Jia, G.~Lin, G.~Noubir, R.~Rajaraman, and R.~Sundaram}, {\em Universal
  approximations for {TSP}, {Steiner} tree, and set cover}, in 37th Annual ACM
  Symposium on Theory of Computing (STOC), ACM Press, May 2005, pp.~386--395.
\newblock {\sc doi}:
  \href{\detokenize{http://doi.org/10.1145/1060590.1060649}}{\detokenize{10.1145/1060590.1060649}}.

\bibitem[KK08]{KK08a}
{\sc H.~A. Kierstead and A.~V. Kostochka}, {\em A short proof of the
  {Hajnal}-{Szemer{\'e}di} theorem on on equitable colouring}, Combinatorics,
  Probability and Computing, 17 (2008), pp.~265--270.
\newblock {\sc doi}:
  \href{\detokenize{http://doi.org/10.1017/S0963548307008619}}{\detokenize{10.1017/S0963548307008619}}.

\bibitem[KK12]{KK12a}
{\sc H.~A. Kierstead and A.~V. Kostochka}, {\em Every 4-colorable graph with
  maximum degree 4 has an equitable 4-coloring}, Journal of Graph Theory, 71
  (2012), pp.~31--48.
\newblock {\sc doi}:
  \href{\detokenize{http://doi.org/10.1002/jgt.20630}}{\detokenize{10.1002/jgt.20630}}.

\bibitem[KKMS10]{KKMS10}
{\sc H.~A. Kierstead, A.~V. Kostochka, M.~Mydlarz, and E.~Szemer{\'e}di}, {\em
  A fast algorithm for equitable coloring}, Combinatorica, 30 (2010),
  pp.~217--224.
\newblock {\sc doi}:
  \href{\detokenize{http://doi.org/10.1007/s00493-010-2483-5}}{\detokenize{10.1007/s00493-010-2483-5}}.

\bibitem[KNR92]{KNR92}
{\sc S.~Kannan, M.~Naor, and S.~Rudich}, {\em Implicit representation of
  graphs}, SIAM Journal on Discrete Mathematics, 5 (1992), pp.~596--603.
\newblock {\sc doi}:
  \href{\detokenize{http://doi.org/10.1137/0405049}}{\detokenize{10.1137/0405049}}.

\bibitem[Kou21]{Kouekam21}
{\sc C.~Kouekam}, {\em Induced universal graphs for edge-colored complete
  graphs}, master thesis, Karlsruhe Institute of Technology, Department of
  Informatics Institute of Theoretical Informatics, July 2021.
\newblock
  \href{\detokenize{https://i11www.iti.kit.edu/_media/teaching/theses/ma-kouekam-21.pdf}}{\detokenize{https://i11www.iti.kit.edu/_media/teaching/theses/ma-kouekam-21.pdf}}.

\bibitem[Mad67]{Mader67}
{\sc W.~Mader}, {\em Homomorphieeigenschaften und mittlere {K}antendichte von
  {G}raphen}, Mathematische Annalen, 174 (1967), pp.~265--268.
\newblock {\sc doi}:
  \href{\detokenize{http://doi.org/10.1007/BF01364272}}{\detokenize{10.1007/BF01364272}}.

\bibitem[McD08]{McDiarmid08}
{\sc C.~J. McDiarmid}, {\em Random graphs on surfaces}, Journal of
  Combinatorial Theory, Series B, 98 (2008), pp.~778--797.
\newblock {\sc doi}:
  \href{\detokenize{http://doi.org/10.1016/j.jctb.2007.11.006}}{\detokenize{10.1016/j.jctb.2007.11.006}}.

\bibitem[Mey73]{Meyer73}
{\sc W.~Meyer}, {\em Equitable coloring}, The American Mathematical Monthly, 80
  (1973), pp.~920--922.
\newblock {\sc doi}:
  \href{\detokenize{http://doi.org/10.1080/00029890.1973.11993408}}{\detokenize{10.1080/00029890.1973.11993408}}.

\bibitem[MK12]{McDK12}
{\sc C.~J. McDiarmid and V.~Kurauskas}, {\em Random graphs containing few
  disjoint excluded minors}, Random Structures and Algorithms, 44 (2012),
  pp.~240--268.
\newblock {\sc doi}:
  \href{\detokenize{http://doi.org/10.1002/rsa.20447}}{\detokenize{10.1002/rsa.20447}}.

\bibitem[NRR20]{NRR20}
{\sc M.~Noy, C.~Requil{\'e}, and J.~Ru{\'e}}, {\em Further results on random
  cubic planar graphs}, Random Structures and Algorithms, 56 (2020),
  pp.~892--924.
\newblock {\sc doi}:
  \href{\detokenize{http://doi.org/10.1002/rsa.20893}}{\detokenize{10.1002/rsa.20893}}.

\bibitem[Ott48]{Otter48}
{\sc R.~Otter}, {\em The number of trees}, Annals of Mathematics, 49 (1948),
  pp.~583--599.
\newblock {\sc doi}:
  \href{\detokenize{http://doi.org/10.2307/1969046}}{\detokenize{10.2307/1969046}}.

\bibitem[Owe95]{Owen95}
{\sc A.~Owen}, {\em Software in {C} for orthogonal arrays}, tech. rep.,
  Department of Statistics, Stanford, USA, July 1995.
\newblock
  \href{\detokenize{https://lib.stat.cmu.edu/designs/oa.c}}{\detokenize{https://lib.stat.cmu.edu/designs/oa.c}}.

\bibitem[RST94]{RST94}
{\sc N.~Robertson, P.~D. Seymour, and R.~Thomas}, {\em Quickly excluding a
  planar graph}, Journal of Combinatorial Theory, Series B, 62 (1994),
  pp.~323--348.
\newblock {\sc doi}:
  \href{\detokenize{http://doi.org/10.1006/jctb.1994.1073}}{\detokenize{10.1006/jctb.1994.1073}}.

\bibitem[SGIM13]{SGM13}
{\sc V.~Shevelev, C.~R. Greathouse~IV, and P.~J. Moses}, {\em On intervals
  $(kn,(k+1)n)$ containing a prime for all $n>1$}, Journal of Integer
  Sequences, 16 (2013).
\newblock
  \href{\detokenize{https://cs.uwaterloo.ca/journals/JIS/VOL16/Moses/moses1.html}}{\detokenize{https://cs.uwaterloo.ca/journals/JIS/VOL16/Moses/moses1.html}}.

\bibitem[SSS20]{SSS20}
{\sc M.~Scheucher, H.~Schrezenmaier, and R.~Steiner}, {\em A note on universal
  point sets for planar graphs}, Journal of Graph Algorithms and Applications,
  24 (2020), pp.~247--267.
\newblock {\sc doi}:
  \href{\detokenize{http://doi.org/10.7155/jgaa.00529}}{\detokenize{10.7155/jgaa.00529}}.

\end{thebibliography}

%\defaultbibliography{a}
%\defaultbibliographystyle{my_alpha_doi}
%\putbib
%\end{bibunit}

%%%%%%%%%%%%%%%%%%%%%%%%%%%%%%%%%%%%%%%%%%%%%%%%%%%%%%%%%%%%%%%%%%%%%% 
\end{document}